\documentclass[9 pt, twocolumn]{IEEEtran}
\usepackage{amsmath,amssymb,euscript,yfonts,psfrag,latexsym,dsfont,graphicx}
\usepackage{bbm,color,amstext,wasysym,parskip,balance}
\usepackage{amsthm}
\usepackage{caption}
\usepackage{subcaption}
\graphicspath{{./},{./figures/}}

\usepackage{algorithm}
\usepackage{algorithmic}
\usepackage{amsmath}

\newtheorem{thm}{Theorem}

\newtheorem{lemma}[thm]{Lemma}
\newtheorem{prop}{Proposition}

\newtheorem{remark}[thm]{Remark}


\newcommand{\mE}{{\mathbb E}}

\newcommand{\mR}{{\mathbb R}}

\newcommand{\cN}{{\mathcal N}}
\newcommand{\cO}{{\mathcal O}}
\newcommand{\cP}{{\mathcal P}}

\newcommand{\cY}{{\mathcal Y}}

\newcommand{\HH}{{\mathrm H}}

\newcommand{\tr}{\operatorname{Tr}}

\newcommand{\argmin}{\operatorname{argmin}}



\newcommand{\red}{\color{black}}

\definecolor{grey}{rgb}{0.6,0.6,0.6}
\definecolor{lightgray}{rgb}{0.97,.99,0.99}


\setlength{\parskip}{10pt}
\setlength{\parindent}{20pt}

\newlength{\bibitemsep}\setlength{\bibitemsep}{.005\baselineskip plus .005\baselineskip minus .005\baselineskip}
\newlength{\bibparskip}\setlength{\bibparskip}{0pt}
\let\oldthebibliography\thebibliography
\renewcommand\thebibliography[1]{%
  \oldthebibliography{#1}%
  \setlength{\parskip}{\bibitemsep}%
  \setlength{\itemsep}{\bibparskip}%
}

\begin{document}
\title{Covariance Steering for Nonlinear Control-affine Systems}
\author{Hongzhe Yu, Zhenyang Chen, and Yongxin Chen
    \thanks{This work was supported by the NSF under grant 1942523 and 2008513.}
\thanks{H. Yu, Z. Chen, and Y. Chen are with the Georgia Institute of Technology, Atlanta, GA, USA. {\tt\small \{hyu419, zchen927, yongchen\}@gatech.edu}}}

\maketitle

\begin{abstract}
We consider the covariance steering problem for nonlinear control-affine systems. Our objective is to find an optimal control strategy to steer the state of a system from an initial distribution to a target one whose mean and covariance are given. Due to the nonlinearity, the existing techniques for linear covariance steering problems are not directly applicable. By leveraging the Girsanov theorem, we formulate the problem into an optimization over the space of path distributions. We then adopt a generalized proximal gradient algorithm to solve this optimization, where each update requires solving a linear covariance steering problem. Our algorithm is guaranteed to converge to a local solution with a sublinear rate. In addition, each iteration of the algorithm can be achieved in closed form, and thus the computational complexity of it is insensitive to the resolution of time-discretization. {\red In the examples, our method achieves 1000 times speedup over an existing algorithm.}  
\end{abstract}

\section{Introduction}

%
%
%
%

Uncertainties are ubiquitous in engineering systems and an important task of feedback control is to mitigate their effects on the behavior of the systems. Very often the goal of control is to ensure the system does not deviate too much from the desired behavior in the presence of uncertainties. A standard approach for this task is optimal control where a cost function is used to promote the desired system behavior. The cost function is a design parameter and is often chosen by trial and error until the desired performance is achieved. 
The covariance control/assignment paradigm was born \cite{HotSke87,IwaSke92,XuSke92,GriSke97} in the quest of providing a more direct approach to mitigate uncertainties. In its original formulation, the goal was to find a control strategy for a linear time-invariant stochastic system so that its state has a specified covariance in steady state. The covariance control theory was recently extended to the finite horizon control setting \cite{CheGeoPav15a,CheGeoPav15b,CheGeoPav15c,HalWen16,CheGeoPav17a,Bak16a,Bak18} where one seeks to steer the state covariance of a linear dynamic system from an initial value to a target value. The covariance control framework has been successfully used in a range of applications \cite{ZhuGriSke95,RidTsi18,OkaTsi19,Kal02}.

Most existing works on covariance control/steering are for linear dynamics. The methods developed in them are not directly applicable to nonlinear problems. The purpose of this work is to develop a covariance steering method for nonlinear stochastic dynamics. 
Even though for nonlinear stochastic systems the state distribution is no longer Gaussian, the covariance still provides valuable information about the uncertainties of the state variables. Indeed, under the mild assumption that the state follows a sub-Gaussian distribution, the covariance can properly capture the high probability region and thus the uncertainty of the state \cite{RudVer13}. The same idea of approximating the full distribution with first order and second order moments has been widely used in filtering algorithms such as the extended Kalman filter algorithm.

In this paper, we develop an efficient algorithm of nonlinear covariance steering for control-affine nonlinear systems. 
Our method is based on the celebrated Girsanov theorem \cite{Gir60} for stochastic processes, which connects the control energy with the Kullback-Leibler divergence \cite{Cov99} between the measures induced by the controlled and uncontrolled dynamics. As a result, the covariance steering problem we consider can be reformulated as an infinite dimensional optimization problem over the space of measures of trajectories. The optimization can then be solved using the generalized proximal gradient descent algorithm \cite{BecTeb03,Bec17}, a popular algorithm for nonlinear programming. For our covariance steering problem, the proximal gradient descent algorithm converts the problem into a sequence of linear covariance steering problems, and each of these linear covariance steering problems has a closed-form solution. Our algorithm is guaranteed to converge to a local optimal solution with sublinear rate $\cO(1/k)$. Moreover, each iteration of the algorithm is associated with a feasible solution to the problem. Thus, whenever we stop the algorithm, we can still obtain a controller that steer the state to desired target mean and covariance.

The nonlinear covariance control problem was recently considered in \cite{RidOkaTsi19,YiCaoTheChe20,BakTso20,TsoBak20}. In \cite{RidOkaTsi19} the cost function is assumed to be quadratic and an iterative linearization method resembling sequential quadratic programing is used. The algorithm is not guaranteed to converge. In \cite{YiCaoTheChe20} a method based on dynamic differential programming was developed. To enforce the covariance constraint on the terminal state, the algorithm first relaxes the constraint using Lagrangian method and then uses primal-dual updates to search for the solution. In \cite{BakTso20}, nonlinear covariance steering for discrete dynamics was studied. The method is similar to that in \cite{RidOkaTsi19} but an unscented transform is used to better capture the nonlinearity of the dynamics. The method was extended to the data-driven setting in \cite{TsoBak20} where a Gaussian process is used to identify the unknown dynamics. Compared with all these works, our method is tailored for continuous-time control-affine dynamics with general state cost. A distinguishing feature of the proposed algorithm is that its complexity is insensitive to the resolution of time-discretization in implementation while all the other algorithms do not scale well with this resolution. {\red In our experiments, our method shows at least 1000 times acceleration compared with \cite{RidOkaTsi19}.}

The rest of the paper is structured as follows. In Section \ref{sec:back} we provide the background on covariance control for linear dynamics and the proximal gradient algorithm. The problem is formulated in Section \ref{sec:formulation} and the algorithm is developed in Section \ref{sec:algorithm}. An extension of the method to a slightly more general systems is presented in Section \ref{sec:extension},
which is followed by a numerical example in Section \ref{sec:example} and a concluding remark in Section \ref{sec:conclusion}.

\section{Background}\label{sec:back}
In this section, we provide a brief introduction to the covariance steering problem for continuous-time linear systems \cite{CheGeoPav15a,CheGeoPav15b,CheGeoPav15c}. We also present key steps of the proximal gradient algorithm, on which our method is based.
\subsection{Covariance steering for linear systems}\label{sec:linearcov}
In the covariance steering/control problems for linear systems \cite{CheGeoPav15a}, the objective is to drive the state of a linear stochastic system from an initial Gaussian random vector $X_0\sim \cN(m_0, \Sigma_0)$ at $t=0$ to a terminal one $X_1\sim \cN(m_1, \Sigma_1)$ at $t=1$ over a time interval $[0, 1]$~\footnote{Any finite time interval $[0, T]$ can be converted to $[0, 1]$ by rescaling. Thus, without loss of generality, we use the time interval $[0, 1]$ throughout.}. In \cite{CheGeoPav15a,CheGeoPav15c}, the linear dynamics under consideration is 
	\[
		dX_t = A(t)X_t dt + B(t)(u_tdt+ \sqrt{\epsilon} dW_t),
	\]
which describes the behavior of linear stochastic control system whose actuation is corrupted by white noise. Here the pair $A(t)\in \mR^{n\times n}, B(t) \in \mR^{n\times p}$ is assumed to be controllable, $W_t\in \mR^p$ represents a standard Wiener process \cite{CohEll15}, and $\epsilon>0$ parameterizes the intensity of the disturbance. 
%
When one seeks also to minimize a certain cost function, the covariance steering problem for linear dynamics in the continuous-time setting is \cite{CheGeoPav15c}
	\begin{subequations}\label{eq:covcontrollinear}
	\begin{eqnarray}
		\min_{u} && \mE \left\{\int_0^1 [\frac{1}{2}\|u_t\|^2 + \frac{1}{2} X_t^T Q(t) X_t]dt\right\}
		\\&& dX_t = A(t)X_t dt + B(t)(u_tdt+ \sqrt{\epsilon} dW_t)
		\\&& X_0 \sim \cN(m_0, \Sigma_0),\quad X_1 \sim \cN(m_1, \Sigma_1),
	\end{eqnarray}
	\end{subequations}
where the optimization is over all the state feedback control strategies. The state cost matrix $Q(t)\in \mR^{n \times n}$ is often assumed to be positive semi-definite. This problem differs from standard linear quadratic optimal control problems in that there is no explicit terminal cost. Instead, a constraint $X_1 \sim \cN(m_1, \Sigma_1)$ on the statistics of the terminal state is imposed. 
	
Thanks to linearity, the mean and covariance of the system can be controlled separately \cite{CheGeoPav15a,CheGeoPav15b,CheGeoPav15c}. The control of the mean is the solution to the standard optimal control problem 
	\begin{subequations}
	\begin{eqnarray}
		\min_{v} && \int_0^1 [\frac{1}{2}\|v_t\|^2 + \frac{1}{2} x_t^T Q(t) x_t]dt
		\\&& \dot x_t = A(t)x_t + B(t)v_t
		\\&& x_0 =m_0, \quad x_1 = m_1.
	\end{eqnarray}
	\end{subequations}
The control for the covariance is of the form $-B(t)^T\Pi(t) X_t$ with $\Pi(\cdot), \HH(\cdot)$ satisfying a coupled Riccati equations \cite{CheGeoPav17a}
    \begin{subequations}\label{eq:LQschrodinger}
    \begin{eqnarray}\label{eq:LQschrodinger1}
   	-\dot\Pi(t)\!\!\!\!&=&\!\!\!\!A^T\Pi(t)\!+\!\Pi(t)A(t)\!-\!\Pi(t)B(t)B(t)^T\Pi(t) \!+\!Q(t)
    \\\label{eq:LQschrodinger2}
    -\dot\HH(t)\!\!\!\!&=&\!\!\!\!A^T\HH(t)\!+\!\HH(t)A(t)\!+\!\HH(t)B(t)B(t)^T\HH(t) \!-\!Q(t)
    \\\label{eq:LQschrodinger3}
    \epsilon\Sigma_0^{-1}\!\!\!\!&=&\!\!\!\!\Pi(0)+\HH(0)
    \\\label{eq:LQschrodinger4}
    \epsilon\Sigma_1^{-1}\!\!\!\!&=&\!\!\!\!\Pi(1)+\HH(1).
    \end{eqnarray}
    \end{subequations}
It turns out \eqref{eq:LQschrodinger} has a unique solution as in the following result \cite{CheGeoPav15c}. 
\begin{thm}
The coupled system of Riccati equations (\ref{eq:LQschrodinger1}-\ref{eq:LQschrodinger4}) has a unique solution, which is determined by the
initial value problem consisting of (\ref{eq:LQschrodinger1}-\ref{eq:LQschrodinger2}) and
    \begin{subequations}\label{eq:initial}
\begin{eqnarray}
    \Pi(0)&=& \frac{\epsilon\Sigma_0^{-1}}{2}-\Phi_{12}^{-1}\Phi_{11}
    \\\nonumber &&-\Sigma_0^{-1/2}\left(\frac{\epsilon^2I}{4}+\Sigma_0^{1/2}\Phi_{12}^{-1}\Sigma_1
    (\Phi_{12}^T)^{-1}\Sigma_0^{1/2}\right)^{1/2}\Sigma_0^{-1/2},
    \label{eq:initial_a}
    \\
    \HH(0)&=& \epsilon\Sigma_0^{-1}-\Pi(0),
    \end{eqnarray}
    \end{subequations}
where
    \begin{equation}\nonumber
        \Phi(t,s)=\left[
        \begin{matrix}
        \Phi_{11}(t,s) & \Phi_{12}(t,s)\\
        \Phi_{21}(t,s) & \Phi_{22}(t,s)
        \end{matrix}\right]
    \end{equation}
is the state transition matrix corresponding to $\partial \Phi(t,s)/\partial t = M(t)\Phi(t,s)$ with $\Phi(s,s)=I$ and
    \[
       M(t)= \left[
        \begin{matrix}A(t) & -B(t)B(t)^T\\-Q(t)& -A(t)^T\end{matrix}
        \right],
    \]
and 
    \[
        \left[
        \begin{matrix}
        \Phi_{11}& \Phi_{12}\\
        \Phi_{21}& \Phi_{22}
        \end{matrix}\right]
        :=\left[
        \begin{matrix}
        \Phi_{11}(1,0) & \Phi_{12}(1,0)\\
        \Phi_{21}(1,0) & \Phi_{22}(1,0)
        \end{matrix}\right].
    \]
\end{thm}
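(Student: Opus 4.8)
The plan is to linearize both Riccati equations through one common Hamiltonian flow, which collapses the coupled two-point boundary value problem \eqref{eq:LQschrodinger1}--\eqref{eq:LQschrodinger4} to a single algebraic matrix equation for $\Pi(0)$, and then to solve that equation in closed form.

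First I would note that \eqref{eq:LQschrodinger1} is the matrix Riccati equation associated with the Hamiltonian matrix $M(t)$: writing $\Phi(t,s)$ for the transition matrix of $\dot\Phi=M(t)\Phi$, $\Phi(s,s)=I$, any solution of \eqref{eq:LQschrodinger1} is propagated by
\[
\Pi(t)=\big(\Phi_{21}(t,0)+\Phi_{22}(t,0)\Pi(0)\big)\big(\Phi_{11}(t,0)+\Phi_{12}(t,0)\Pi(0)\big)^{-1}
\]
as long as the inverted factor is nonsingular. A one-line differentiation shows that $\Theta(t):=-\HH(t)$ solves \emph{the same} Riccati equation \eqref{eq:LQschrodinger1}: the sign flip on the quadratic term between \eqref{eq:LQschrodinger1} and \eqref{eq:LQschrodinger2} is exactly what $\HH\mapsto-\HH$ undoes. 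Hence $-\HH$ is propagated by the \emph{same} $\Phi$,
\[
-\HH(t)=\big(\Phi_{21}(t,0)-\Phi_{22}(t,0)\HH(0)\big)\big(\Phi_{11}(t,0)-\Phi_{12}(t,0)\HH(0)\big)^{-1}.
\]

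Next I would use the boundary data to eliminate unknowns. Equation \eqref{eq:LQschrodinger3} forces $\HH(0)=\epsilon\Sigma_0^{-1}-\Pi(0)$, so the whole pair $(\Pi,\HH)$ on $[0,1]$ is determined by the single matrix $\Pi(0)$; thus solvability and uniqueness for \eqref{eq:LQschrodinger1}--\eqref{eq:LQschrodinger4} is equivalent to there being a unique $\Pi(0)$ for which \eqref{eq:LQschrodinger4} holds and both flows stay defined on $[0,1]$. Substituting the two propagation formulas at $t=1$ (with $\Phi_{ij}:=\Phi_{ij}(1,0)$) into \eqref{eq:LQschrodinger4} and clearing inverses gives a matrix equation in $\Pi(0)$ alone. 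Here I would exploit that $M(t)$ is Hamiltonian, so $\Phi(1,0)$ is symplectic and its blocks satisfy $\Phi_{11}^T\Phi_{22}-\Phi_{21}^T\Phi_{12}=I$ with $\Phi_{11}^T\Phi_{21}$, $\Phi_{12}^T\Phi_{22}$, and $\Phi_{12}^{-1}\Phi_{11}$ symmetric; using these, the equation reduces to a quadratic (algebraic-Riccati-type) equation in $\Pi(0)$. Completing the square in the variable $\Sigma_0^{1/2}\big(\Pi(0)-\tfrac{\epsilon}{2}\Sigma_0^{-1}+\Phi_{12}^{-1}\Phi_{11}\big)\Sigma_0^{1/2}$ then yields precisely the closed form \eqref{eq:initial_a}, the matrix square root being the square-completion term, and $\HH(0)=\epsilon\Sigma_0^{-1}-\Pi(0)$ is the remaining line of \eqref{eq:initial}. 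For this to be legitimate I would check: (i) controllability of $(A,B)$ makes $\Phi_{12}$ invertible; (ii) the argument $\tfrac{\epsilon^2}{4}I+\Sigma_0^{1/2}\Phi_{12}^{-1}\Sigma_1(\Phi_{12}^T)^{-1}\Sigma_0^{1/2}$ of the square root is positive definite (a positive multiple of $I$ plus a positive-semidefinite term), so its symmetric positive-definite square root exists; and (iii) a sign/feasibility argument selects the minus sign in \eqref{eq:initial_a}, which is what pins $\Pi(0)$, and hence the whole solution, down uniquely.

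Finally, with $\Pi(0)$ and $\HH(0)$ fixed, the initial value problem \eqref{eq:LQschrodinger1}--\eqref{eq:LQschrodinger2} has a unique local solution by the standard theory of ODEs; it only remains to rule out finite escape on $[0,1]$, i.e.\ to show the propagating factors $\Phi_{11}(t,0)+\Phi_{12}(t,0)\Pi(0)$ and $\Phi_{11}(t,0)-\Phi_{12}(t,0)\HH(0)$ stay nonsingular for every $t\in[0,1]$. I expect this, rather than the routine algebra of the previous paragraph, to be the delicate point. The cleanest route is variational: $\Pi(t)$ is the value-function Hessian of the underlying well-posed finite-horizon LQ/Schr\"odinger-bridge problem whose optimal cost is finite, so $\Pi(t)$ cannot diverge before $t=1$; alternatively one invokes existence and uniqueness of the linear Schr\"odinger bridge and reads $(\Pi,\HH)$ off its Riccati representation. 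Combining the unique algebraic solution for the initial data with the unique, globally defined forward flow gives the asserted unique solution of \eqref{eq:LQschrodinger1}--\eqref{eq:LQschrodinger4}.
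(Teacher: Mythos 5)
This theorem is not proved in the paper at all: it is imported verbatim from the linear covariance-steering literature (the paper simply writes ``It turns out \eqref{eq:LQschrodinger} has a unique solution as in the following result [CheGeoPav15c]''), so there is no in-paper proof to compare against. Your sketch follows what is essentially the standard derivation in that cited source: represent the flow of the Riccati equation \eqref{eq:LQschrodinger1} through the linear Hamiltonian system $\dot\Phi=M\Phi$, observe that $-\HH$ satisfies the \emph{same} Riccati equation (your sign computation checks out), use \eqref{eq:LQschrodinger3} to eliminate $\HH(0)$, impose \eqref{eq:LQschrodinger4} at $t=1$ to get a single algebraic equation in $\Pi(0)$, and complete the square using the symplectic block identities (in particular $\Phi_{11}\Phi_{12}^T=\Phi_{12}\Phi_{11}^T$, which gives the symmetry of $\Phi_{12}^{-1}\Phi_{11}$). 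All of that is sound in outline and consistent with the stated closed form \eqref{eq:initial_a}.

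That said, as a proof the proposal defers exactly the two claims that carry the mathematical weight of the theorem. First, the reduction of the $t=1$ condition to a quadratic whose square-completion is the displayed matrix square root is asserted, not carried out; this is a nontrivial (if routine) block-symplectic computation and is where $\Phi_{12}$-invertibility (from controllability of $(A,B)$) is actually used. Second, and more importantly, \emph{uniqueness} and \emph{global existence on $[0,1]$} are the substance of the statement: the quadratic for $\Pi(0)$ has two roots, and you only gesture at ``a sign/feasibility argument'' for discarding the $+$ root and at a ``variational'' argument for ruling out finite escape of $\Pi$ and $\HH$. In the source these points are settled by tying $(\Pi,\HH)$ to the Gaussian Schr\"odinger bridge --- one shows $\epsilon\Sigma_t^{-1}=\Pi(t)+\HH(t)$ propagates for \emph{all} $t$, with $\Sigma_t\succ0$ the covariance of a well-defined bridge, which simultaneously forces the minus sign and keeps the propagating factors $\Phi_{11}(t,0)+\Phi_{12}(t,0)\Pi(0)$ nonsingular. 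If you intend this as a complete proof rather than a roadmap, those two steps need to be filled in; as a roadmap it correctly identifies where the difficulty lies.
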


Denote the optimal control for mean to be $v_t^\star$ and corresponding state trajectory to be $x_t^\star$. Combining it with the covariance control yields the optimal feedback policy
	\[
		u_t^\star = -B(t)^T\Pi(t) (X_t-x_t^\star) + v_t^\star.
	\]

\subsection{Proximal gradient algorithm}
The proximal gradient algorithm \cite{Bec17} is designed for the composite optimization
	\begin{equation}\label{eq:composite}
		\min_{y\in\cY} F(y) + G(y),
	\end{equation}
where $\cY$ denotes a feasible set. The function $F$ is assumed to be smooth. The function $G$ is usually a regularizer that is often not smooth. The algorithm follows the update
	\begin{subequations}\label{eq:pgupdate}
	\begin{eqnarray}\label{eq:pgupdate1}
		y^{k+1}\!\!\!\! &=&\!\!\!\!\! \argmin_{y\in \cY} G(y) + \frac{1}{2 \eta} \|y-(y^k - \eta \nabla F(y^k))\|^2
		\\\!\!\!\!&=&\!\!\!\!\! \argmin_{y\in \cY} G(y) \!+\! \frac{1}{2 \eta} \|y\!-\!y^k\|^2\!\! +\!\! \langle \nabla F(y^k), y\!-\!y^k\rangle \label{eq:pgupdate2}
	\end{eqnarray}
	\end{subequations}
with $\eta>0$ being the stepsize. One advantage of the proximal gradient algorithm is that it only evaluates the gradient of $F$ and doesn't require even the differentiability of $G$. In many applications, $G$ is a regularizer of simple form, e.g., 1-norm, nuclear norm, and the minimization \eqref{eq:pgupdate} can be carried out efficiently. 

The proximal gradient algorithm has been generalized to the non-Euclidean setting. It is built upon the mirror descent method \cite{BecTeb03,Bec17}. Let $D(\cdot, \cdot)$ be a Bregman divergence, then the generalized non-Euclidean proximal gradient algorithm reads
	\begin{equation}\label{eq:gpg}
		y^{k+1} = \argmin_{y\in \cY} G(y) + \frac{1}{\eta} D(y, y^k) + \langle \nabla F(y^k), y-y^k\rangle.
	\end{equation}
A popular choice of $D(\cdot,\cdot)$ is the Kullback-Leibler divergence ${\rm KL}(\cdot \| \cdot)$, which is suitable for optimization over probability vectors/distributions. 

The proximal gradient algorithm enjoys nice convergence properties. When both $F$ and $G$ are convex, the algorithm is guaranteed to converge to the global minimum with rate $\cO(1/k)$ \cite{BecTeb03,Bec17}. When $F$ is nonconvex, one can only expect for convergence to local solutions. It turns out that objective function $F(y)+G(y)$ is monotonically decreasing along the updates, and the updates converge to some stationary points with sublinear rate $\cO(1/k)$ with respect to some suitable criteria \cite{LiZhoLiaVar17}. 

\section{Problem formulation}\label{sec:formulation}
Consider a nonlinear control-affine system
	\begin{equation}\label{eq:nonlinear}
		dX_t = f(t,X_t)dt + B(t) (u_t dt + \sqrt{\epsilon} d W_t)
	\end{equation}
where $X_t\in \mR^n$ represents the state, and $W_t$ denotes a standard Wiener process. The drift function $f(t,x)$ is assumed to be locally Lipschitz continuous with respect to $x$ and continuous with respect to $t$. The input matrix $B(t)\in\mR^{n\times p}$ is assumed to be continuous and full rank. Many nonlinear dynamics in engineering applications can be modeled by \eqref{eq:nonlinear} where $\sqrt{\epsilon} d W_t$ captures the uncertainties in the actuation. Note that the noise is assumed to enter the dynamics in the same channel as the input. This is a widely used assumption in stochastic control in both theory and practice \cite{CheGeoPav14e,ThiKap15,WilThe17,CalHal21}.

We are interested in the covariance control problems of steering the state statistics of the system from an initial value to a target one. In particular, we seek a state feedback control strategy that achieves this goal and meanwhile minimizes a cost function. The covariance steering problem we consider is
	\begin{subequations}\label{eq:covcontrol}
	\begin{eqnarray}\label{eq:covcontrol1}
		\min_{u} && \mE \left\{\int_0^1 [\frac{1}{2}\|u_t\|^2 + V(X_t)]dt\right\}
		\\\label{eq:covcontrol2}
		&&dX_t = f(t,X_t)dt + B(t) (u_t dt + \sqrt{\epsilon} d W_t)
		\\\label{eq:covcontrol3}
		&& 
		X_0 \sim \rho_0,\quad X_1 \sim \rho_1,
	\end{eqnarray}
	\end{subequations}
where $\rho_0$ ($\rho_1$) is a probability distribution with mean $m_0$ ($m_1$) and covariance $\Sigma_0$ ($\Sigma_1$). The cost function has two decoupled terms: $\frac{1}{2}\|u_t\|^2$ that only depends on the control and $V(X_t)$ that only depends on the state. The problem \eqref{eq:covcontrol} has been investigated in the study of distribution control \cite{CheGeoPav14e,CheGeoPav14d,CalHal20,CalHal21} for general distributions $\rho_0, \rho_1$, which links control theory and optimal transport theory \cite{CheGeoPav14e,CheGeoPav20}.
	
The special structure of the cost and dynamics in \eqref{eq:covcontrol} leads to an elegant reformulation as an optimization over probability measures. Denote by $\cP^u$ the distribution over the path space $\Omega:=C([0,1], \mR^n)$ induced by the stochastic process \eqref{eq:nonlinear}, and as a convention, by $\cP^0$ the distribution induced by the same process with zero control, then {\red the Girsanov theorem \cite{Gir60,IkeWat14} states that 
    \begin{subequations}\label{eq:girsanov}
    \begin{equation}\label{eq:girsanova}
        \frac{d\cP^u}{d\cP^0} = \exp \left(\int_0^1\frac{1}{2\epsilon}\|u_t\|^2dt + \frac{1}{\sqrt\epsilon}u_t^T dW_t\right).
    \end{equation}
When $B$ is full (column) rank, this can be equivalently written as
    \begin{equation}\label{eq:girsanovb}
        \frac{d\cP^u}{d\cP^0} = \exp \left(\int_0^1\frac{1}{2\epsilon}\|B u_t\|^2_{(BB^T)^\dagger} dt + \frac{1}{\sqrt\epsilon}u_t^T dW_t\right).
    \end{equation}
    \end{subequations}
It follows from the Girsanov theorem that \cite{Ben71,Dai91,CheGeoPav14e}}
	\begin{equation}
		\mE \left\{\int_0^1\frac{1}{2\epsilon}\|u_t\|^2dt\right\}={\rm KL} (\cP^u\| \cP^0):= \int \log\frac{d\cP^u}{d\cP^0} d\cP^u.
	\end{equation}
Intuitively, it says that the difference between the controlled and uncontrolled processes can be quantified by the control energy. This relation 
is heavily utilized recently in the distributional control problem \cite{CheGeoPav14e}.
Thus, the problem \eqref{eq:covcontrol} can be reformulated as
	\begin{subequations}\label{eq:KLformulation}
	\begin{eqnarray}\label{eq:KLformulation1}
		\min_{\cP^u} &&\int d\cP^u \left[\log\frac{d\cP^u}{d\cP^0} +\frac{1}{\epsilon}V\right]
		\\\label{eq:KLformulation2}
		&& (X_0)_\sharp \cP^u = \rho_0, \quad (X_1)_\sharp \cP^u = \rho_1.
	\end{eqnarray}
	\end{subequations}
Here $(X_0)_\sharp \cP^u$ ($(X_1)_\sharp \cP^u$) stands for the distribution of $X_0$ ($X_1$) when the process $X_t$ is associated with the distribution $\cP^u$. 

Denote by $\Pi (\rho_0, \rho_1)$ the set of all distributions over the path space $\Omega$ such that the constraints \eqref{eq:KLformulation2} hold. Let
	\[
		F(\cP^u) = \int [\frac{1}{\epsilon}V-\log d\cP^0] d\cP^u
	\]
and
	\begin{equation}\label{eq:entropy}
		G(\cP^u) = \int d\cP^u \log d\cP^u.
	\end{equation}
Then, \eqref{eq:KLformulation} becomes a composite optimization
	\begin{equation}\label{eq:FG}
		\min_{\cP^u\in\Pi(\rho_0, \rho_1)} F(\cP^u) + G(\cP^u).
	\end{equation}
This is an infinite dimensional optimization. For general marginals $\rho_0, \rho_1$ and nonlinear dynamics \eqref{eq:nonlinear}, the problem can be addressed using the distribution control framework \cite{CheGeoPav14e,CalHal21}. However, this method requires solving a coupled partial differential equation system and does not scale to cases with large state dimension $n$ \cite{CheGeoPav14e,CalHal21}.

{\red Our goal is to develop an efficient algorithm to approximately solve \eqref{eq:FG}.} In the covariance steering formulation, instead of solving \eqref{eq:FG} exactly, we look for an approximate solution $\cP^u$ that is induced by a Gaussian Markov process. Thus, we restrict our search space to $\hat \Pi(\rho_0,\rho_1)$, the space of measures over the path space $\Omega$ that are induced by Gaussian Markov processes and have marginal distributions $\cN(m_0, \Sigma_0), \cN(m_1,\Sigma_1)$. When $\cP$ has relatively small variance, $F(\cP)$ can be approximated by	
	\[
		\int [\frac{1}{\epsilon}\hat V-\log d\hat \cP^0] d\cP
	\]
where $\hat V$ is the second order approximation of $V$ and $\hat\cP^0$ is a Gaussian Markov approximation of $\cP^0$, both along the mean of $\cP$. More precisely, let the mean of $\cP$ be the trajectory $z_t$,
then 
	\begin{equation}\label{eq:linearizeV}
		\hat V(t,x) = V(z_t) + (x^T-z_t^T) \nabla V(z_t) + \frac{1}{2}(x^T-z_t^T) \nabla^2 V(z_t) (x-z_t), 
	\end{equation}
and $\hat\cP^0$ is associated with the Gaussian Markov process
	\begin{equation}\label{eq:linearizeP0}
		dX_t = \nabla f(t,z_t)^T X_t dt + [f(t,z_t) - \nabla f(t, z_t)^T z_t]dt+\sqrt{\epsilon} B(t) dW_t.
	\end{equation}

We have thus arrived at the following nonlinear covariance steering problem
	\begin{equation}\label{eq:noncovcontrol}
		\min_{\cP^u\in\hat\Pi(\rho_0, \rho_1)} \int [\frac{1}{\epsilon}\hat V-\log d\hat \cP^0] d\cP^u + \int d\cP^u \log d\cP^u.
	\end{equation}
This nonlinear covariance steering problem turns out to be considerably easier than the distribution control problem \eqref{eq:FG} {\red from a computational point of view.} Indeed, in the following section, we develop an efficient algorithm for it that is scalable to problems with large state dimension.

\section{Proximal gradient algorithm for covariance steering}\label{sec:algorithm}
The nonlinear covariance steering problem \eqref{eq:noncovcontrol} is clearly a composite optimization
	\begin{equation}\label{eq:noncovcontrolopt}
		\min_{\cP^u\in\hat\Pi(\rho_0, \rho_1)} F(\cP^u) + G(\cP^u)
	\end{equation}
where $G$ is as in \eqref{eq:entropy}, and by abuse of notation,
	\begin{equation}\label{eq:FP}
		F(\cP^u)=\int [\frac{1}{\epsilon}\hat V-\log d\hat \cP^0] d\cP^u = \langle \frac{1}{\epsilon}\hat V-\log d\hat \cP^0, \cP^u \rangle.
	\end{equation}
In this section, we develop an efficient algorithm to solve \eqref{eq:noncovcontrolopt} based on the generalized proximal gradient algorithm \eqref{eq:gpg}.
\subsection{Main algorithm}
Since \eqref{eq:noncovcontrolopt} is an optimization over the space of probability measures, we use Kullback-Leibler divergence in \eqref{eq:gpg}. This leads to the following iteration
	\begin{equation}\label{eq:PG}
		\cP_{k+1} = \argmin_{\cP\in\hat\Pi(\rho_0, \rho_1)} G(\cP) + \frac{1}{\eta} {\rm KL}( \cP\| \cP_k)+ \langle \frac{\delta F}{\delta \cP}(\cP_k), \cP\rangle. 
	\end{equation}
Here we use variation $\frac{\delta F}{\delta \cP}$ instead of gradient since $\cP$ is an infinite dimensional object. As we will see below, each iteration can be realized by solving a linear covariance steering problem. 

First, we derive an explicit expression for $\frac{\delta F}{\delta \cP}$. Note that both $\hat V$ and $\hat\cP^0$ depend on the mean of $\cP$, thus $F(\cP)$ is not a linear function of $\cP$. The variation $\frac{\delta F}{\delta \cP}$ thus has some extra terms other than $\frac{1}{\epsilon}\hat V-\log d\hat \cP^0$. 
\begin{lemma}\label{lem:variationP}
Let $\cP$ be the path distribution induced by a Gaussian Markov process
	\[
		dX_t = A(t) X_t dt + a(t) dt + \sqrt{\epsilon} B(t) dW_t,
	\]
then the variation of $F(\cP)$ with respect to $\cP$ is
{\red
	\begin{eqnarray}\label{eq:gradietnF}
		\frac{\delta F}{\delta \cP}(\cP)(t, x) &=& \frac{1}{\epsilon}\hat V -\log d\hat \cP^0 + \frac{1}{2\epsilon} x^T \nabla\tr(\nabla^2 V(z_t) \Sigma_t) 
		\\\nonumber &&\hspace{-1.4cm}+ \frac{1}{2\epsilon} x^T \nabla \tr((BB^T)^{\dagger}(\nabla f(z_t)^T- A) \Sigma_t (\nabla f(z_t)-A^T)),
	\end{eqnarray}
where $(\cdot)^\dagger$ represents pseudo-inverse, $z_t$ and $\Sigma_t$} are the mean and covariance of $\cP$, respectively, and $\hat V, \hat\cP^0$ are as in \eqref{eq:linearizeV} and \eqref{eq:linearizeP0}.
\end{lemma}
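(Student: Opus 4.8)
The plan is to derive \eqref{eq:gradietnF} by differentiating $F$ along a smooth curve $\cP_s\subset\hat\Pi(\rho_0,\rho_1)$ with $\cP_0=\cP$, writing the derivative as a pairing against the perturbation direction $\mu:=\tfrac{d}{ds}\cP_s\big|_{s=0}$, and reading off the coefficient $\tfrac{\delta F}{\delta\cP}(\cP)(t,x)$. By \eqref{eq:FP}, $F(\cP)=\langle\phi_\cP,\cP\rangle$ with $\phi_\cP:=\tfrac1\epsilon\hat V-\log d\hat\cP^0$, and the key structural fact is that $\phi_\cP$ depends on $\cP$ only through its mean curve $z_\cdot$, since both $\hat V$ in \eqref{eq:linearizeV} and the drift of \eqref{eq:linearizeP0} are built from $z_\cdot$. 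Differentiating the bilinear pairing then gives
\[
\tfrac{d}{ds}F(\cP_s)\big|_{s=0}=\langle\phi_\cP,\mu\rangle+\Big\langle\tfrac{d}{ds}\phi_{\cP_s}\big|_{s=0},\cP\Big\rangle .
\]
The first summand is the pairing of $\tfrac1\epsilon\hat V-\log d\hat\cP^0$ with $\mu$ ($\log d\hat\cP^0$ kept symbolic, the Girsanov formula \eqref{eq:girsanov} supplying its meaning under the pairing), hence it produces the first two terms of \eqref{eq:gradietnF}; the whole task is to evaluate the second, "correction" summand, which carries the $z$-dependence of $\hat V$ and $\hat\cP^0$.

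For the correction I would treat the $\hat V$ and $-\log d\hat\cP^0$ pieces separately, using $\dot z_t:=\tfrac{d}{ds}(\text{mean of }\cP_s)_t\big|_{s=0}=\int x\,d\mu_t(x)$. The $\hat V$-piece is local in time: since $\hat V(t,\cdot)$ depends on $z_\cdot$ only through $z_t$, one differentiates \eqref{eq:linearizeV} in $z$, the zeroth- and first-order terms cancel, and in coordinate $i$ only $\tfrac12(x-z_t)^T[\partial_i\nabla^2V(z_t)](x-z_t)$ remains; averaging this over the Gaussian marginal $\cP_t=\cN(z_t,\Sigma_t)$ via $\mE_\cP[(X_t-z_t)(X_t-z_t)^T]=\Sigma_t$ gives $\tfrac12\tr([\partial_i\nabla^2V(z_t)]\Sigma_t)$, so the $\hat V$-contribution to the correction equals $\tfrac1{2\epsilon}\int_0^1\big(\nabla\tr(\nabla^2V(z_t)\Sigma_t)\big)^{\!T}\dot z_t\,dt$.

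The $-\log d\hat\cP^0$ piece is not local in time, so I would route it through Girsanov \eqref{eq:girsanov}: with $\cP$ fixed, $\langle-\log d\hat\cP^0,\cP\rangle$ differs from ${\rm KL}(\cP\|\hat\cP^0)$ only by a functional of $\cP$, so their $z$-variations coincide, and since $\cP,\hat\cP^0$ are Gaussian--Markov with the same diffusion $\sqrt\epsilon B$ and the same ($z$-independent) initial law, ${\rm KL}(\cP\|\hat\cP^0)=C+\tfrac1{2\epsilon}\mE_\cP\!\int_0^1\|b^\cP(t,X_t)-b^0(t,X_t)\|^2_{(BB^T)^\dagger}dt$, where $b^\cP(t,x)=A(t)x+a(t)$ and $b^0(t,x)=\nabla f(t,z_t)^T(x-z_t)+f(t,z_t)$ is the drift of \eqref{eq:linearizeP0}. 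Differentiating in $s$ at $s=0$, I would use $b^\cP-b^0=(A-\nabla f(z_t)^T)(X_t-z_t)+(\dot z_t-f(z_t))$ together with the fact that $\tfrac{d}{ds}b^0\big|_{s=0}=N_t^T(X_t-z_t)$ is mean-free — the lower-order $\dot z_t$-terms in $b^0$ cancel — where $N_t:=\tfrac{d}{ds}\nabla f(t,z_{s,t})\big|_{s=0}$; taking $\mE_\cP$ then annihilates the $(\dot z_t-f(z_t))$-term, since it multiplies the mean-free factor $X_t-z_t$, and leaves a trace against $\Sigma_t$ that one recognizes as $\tfrac1{2\epsilon}\int_0^1\big(\nabla\tr((BB^T)^\dagger(\nabla f(z_t)^T-A)\Sigma_t(\nabla f(z_t)-A^T))\big)^{\!T}\dot z_t\,dt$.

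To conclude, I would substitute $\dot z_t=\int x\,d\mu_t(x)$ into both correction integrals, so that each $\int_0^1 G_t^T\dot z_t\,dt$ turns into $\langle x^TG_t,\mu\rangle$; matching with $\langle\tfrac{\delta F}{\delta\cP}(\cP),\mu\rangle$ yields exactly the last two terms of \eqref{eq:gradietnF}. I expect the main obstacle to be the $-\log d\hat\cP^0$ piece: one must pass from a path-space functional to its Girsanov/drift-mismatch form, differentiate the linearized drift $b^0$ carefully while tracking which lower-order terms cancel (and understanding why the drift matrix $A$ of $\cP$ persists in the answer — because $b^\cP$ and $b^0$ disagree away from the mean), and justify exchanging $\tfrac{d}{ds}$ with the $\cP$-expectation under mild integrability; the $\hat V$ piece, by contrast, is routine Gaussian moment calculus.
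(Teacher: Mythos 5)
Your proposal is correct and follows essentially the same route as the paper's appendix proof: the same split of the first variation into the direct term $\langle\frac{1}{\epsilon}\hat V-\log d\hat\cP^0,\delta\cP\rangle$ plus a correction from the $z$-dependence of $\hat V$ and $\hat\cP^0$, the same Gaussian-moment cancellation and trace identity for the $\hat V$ piece, and the same Girsanov drift-mismatch computation for the $\log d\hat\cP^0$ piece. The only cosmetic difference is that you phrase the latter through ${\rm KL}(\cP\|\hat\cP^0)$ rather than manipulating the Radon--Nikodym derivative directly; these coincide because the stochastic-integral term has zero $\cP$-expectation, so the content is identical.
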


We remark that the extra terms involve the second order derivative of the drift $f$ and the third order derivative of $V$. Thus, if the dynamics is linear and the cost is quadratic as in the linear covariance steering problems, $\frac{\delta F}{\delta \cP}(\cP) = \frac{1}{\epsilon}\hat V -\log d\hat \cP^0$ and is independent of $\cP$. We also remark that $\frac{\delta F}{\delta \cP}$ in \eqref{eq:gradietnF} is a quadratic function. 
	
We next establish that each iteration of \eqref{eq:PG} is a covariance control problem for linear dynamics. To this end, we represent $\cP^u_k$ by a Gaussian Markov process
	\begin{equation}\label{eq:Pk}
		dX_t = A_k(t) X_t dt + a_k(t) dt + \sqrt{\epsilon} B(t) dW_t.
	\end{equation}
Denote its mean and covariance by $z^k$ and $\Sigma^k$ respectively, then 
	\begin{subequations}
	\begin{eqnarray}
		\dot z^k_t &=& A_k(t)z^k_t +a_k(t)
		\\
		\dot \Sigma_t^k &=& A_k(t)\Sigma_t^k +\Sigma_t^k A_k(t)^T + \epsilon B(t)B(t)^T.
	\end{eqnarray}
	\end{subequations}	
Plugging \eqref{eq:gradietnF} into \eqref{eq:PG}, in view of representation \eqref{eq:Pk} of $\cP^u_k$, we obtain
	\begin{eqnarray}\nonumber
		\cP^u_{k+1} \!\!\!&=&\!\!\!\!\! \argmin_{\cP\in \hat\Pi(\rho_0, \rho_1)} \int [\frac{1}{\epsilon}\hat V+ \frac{1}{2\epsilon} x^T \nabla\tr(\nabla^2 V(z_t^k) \Sigma_t^k) 
		\\\!\!\!&+&\!\!\!\!\! \frac{1}{2\epsilon} x^T \nabla \tr((BB^T)^{\dagger}(\nabla f(z_t^k)^T- A_k) \Sigma_t^k (\nabla f(z_t^k)-A_k^T))\nonumber
		\\\!\!\!&-&\!\!\!\!\!\log d\hat\cP^0\!-\!\frac{1}{\eta} \log d\cP^u_k] d\cP\! +\! (1\!+\!\frac{1}{\eta})\!\!\int d\cP \log d\cP.\label{eq:iteration}
	\end{eqnarray}
	
Linearizing the uncontrolled system 
	\begin{equation}\label{eq:nonlinearzero}
		dX_t = f(t,X_t)dt + \sqrt{\epsilon}  B(t)d W_t
	\end{equation}
along $z^k$ yields the linear approximation
	\begin{equation}\label{eq:GM}
		dX_t = \hat A_k(t) X_t dt + \hat a_k(t) dt + \sqrt{\epsilon} B(t) dW_t
	\end{equation}
with
	\begin{subequations}\label{eq:zS}
	\begin{eqnarray}
		\hat A_k(t) &=& \nabla f(t, z^k_t)^T
		\\
		\hat a_k(t) &=& f(t,z^k_t) - \nabla f(t, z^k_t)^T z^k_t.
	\end{eqnarray}
	\end{subequations}
This linear approximation corresponds to $\hat\cP^0$ at the $k$-th iteration. This is the same as \eqref{eq:linearizeP0} with some additional notations being introduced to emphasize the dependency on the iteration number $k$. With these Gaussian Markov representations of $\cP^0, \cP_k^u$ we establish the following.
\begin{thm}\label{thm:eachiteration}
Each proximal gradient iteration \eqref{eq:iteration} amounts to solving the following linear covariance steering problem
	\begin{subequations}\label{eq:iterlinear}
	\begin{eqnarray}\label{eq:iterlinear1}
		\min_{u}\!\!\!\! &&\!\!\!\! \mE \left\{\int_0^1 [\frac{1}{2}\|u_t\|^2 + \frac{1}{2} X_t^T Q_k(t) X_t + X_t^T r_k(t)]dt\right\}
		\\ \label{eq:iterlinear2}\!\!\!\! &&\!\!\!\!  \hspace{-0.1cm}dX_t = \frac{1}{1+\eta}[ A_k(t)+\eta\hat A_k(t)] X_t dt +  \frac{1}{1+\eta}[a_k(t)
		\\\nonumber \!\!\!\! &&\!\!\!\!  +\eta\hat a_k(t)] dt + B(t)(u_tdt+ \sqrt{\epsilon} dW_t)
		\\ \label{eq:iterlinear3}&& X_0 \sim \rho_0,\quad X_1 \sim \rho_1,
	\end{eqnarray}
	\end{subequations}
where
	\begin{eqnarray*}
		Q_k(t) &=& \frac{\eta}{(1+\eta)}\nabla^2 V(z^k_t)+
		\\\nonumber &&\hspace{-1cm}\frac{\eta}{(1+\eta)^2} (A_k(t)-\hat A_k(t))^T(B(t)B(t)^T)^{\dagger}(A_k(t)-\hat A_k(t))
	\end{eqnarray*}
and
	\begin{align*}
		r_k(t) =& \frac{\eta}{1+\eta}\nabla V+\frac{\eta}{2(1+\eta)}[\nabla \tr(\nabla^2V\Sigma^k_t)-2\nabla^2V(z^k_t)z^k_t
		\\&\hspace{-0.5cm}+\nabla\tr( (B(t)B(t)^T)^{\dagger}(\nabla f^T -A_k)\Sigma^k_t(\nabla f-A_k^T))]
		\\&\hspace{-0.5cm} +\frac{\eta}{(1+\eta)^2}(A_k(t)-\hat A_k(t))^T(B(t)B(t)^T)^{\dagger}(a_k(t)-\hat a_k(t)).
	\end{align*}
	
\end{thm}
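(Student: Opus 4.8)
The plan is to show that, after two harmless positive rescalings of the objective, the minimization \eqref{eq:iteration} is a Kullback--Leibler projection $\min_{\cP\in\hat\Pi(\rho_0,\rho_1)}{\rm KL}(\cP\|\tilde\cP^0)+\langle\psi,\cP\rangle$ onto a Gaussian Markov reference measure $\tilde\cP^0$ against a quadratic path cost $\psi$, and then to recognize that projection as the linear covariance steering problem \eqref{eq:iterlinear} through the Girsanov energy--divergence identity recalled in Section~\ref{sec:back}. First I would multiply the objective in \eqref{eq:iteration} by $\tfrac{\eta}{1+\eta}>0$; this leaves the minimizer unchanged, normalizes the entropy term to $\int d\cP\log d\cP$, and turns the two log-density terms into $-\tfrac{\eta}{1+\eta}\log d\hat\cP^0-\tfrac{1}{1+\eta}\log d\cP^u_k$, where by Lemma~\ref{lem:variationP} the remaining $d\cP$-linear part is $\tfrac{\eta}{1+\eta}$ times ($\tfrac1\epsilon\hat V$ plus the two $x$-linear trace terms of \eqref{eq:gradietnF}).

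The core step is to fuse these two log densities. Both $\hat\cP^0$ (drift $\hat b_k(t,x):=\hat A_k(t)x+\hat a_k(t)$, cf.\ \eqref{eq:GM}--\eqref{eq:zS}) and $\cP^u_k$ (drift $b_k(t,x):=A_k(t)x+a_k(t)$, cf.\ \eqref{eq:Pk}) are Gaussian Markov processes with the common diffusion $\sqrt\epsilon B(t)$, and, since $\cP^u_k$ comes out of the previous linear steering step, its drift differs from $\hat b_k$ by $B(t)$ times an affine feedback; hence $\cP^u_k\ll\hat\cP^0$ and, by Girsanov \eqref{eq:girsanov}, $\log\tfrac{d\cP^u_k}{d\hat\cP^0}$ is affine in $\int_0^1(\cdot)^T(dX_t-\hat b_k\,dt)$ and in $\int_0^1(\cdot)\,dt$. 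Writing $\Sigma^\dagger:=\tfrac1\epsilon(BB^T)^\dagger$ and using the elementary identity $\tfrac{\eta}{1+\eta}\hat b_k^T\Sigma^\dagger\hat b_k+\tfrac{1}{1+\eta}b_k^T\Sigma^\dagger b_k=\bar b_k^T\Sigma^\dagger\bar b_k+\tfrac{\eta}{(1+\eta)^2}(\hat b_k-b_k)^T\Sigma^\dagger(\hat b_k-b_k)$, one obtains $-\tfrac{\eta}{1+\eta}\log d\hat\cP^0-\tfrac{1}{1+\eta}\log d\cP^u_k=-\log d\tilde\cP^0+\tfrac{\eta}{2(1+\eta)^2}\int_0^1(\hat b_k-b_k)^T\Sigma^\dagger(\hat b_k-b_k)\,dt+\mathrm{const}$, where $\tilde\cP^0$ is the Gaussian Markov process with the interpolated drift $\bar b_k(t,x):=\tfrac{1}{1+\eta}[(A_k(t)+\eta\hat A_k(t))x+(a_k(t)+\eta\hat a_k(t))]$ --- exactly the drift of \eqref{eq:iterlinear2}. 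Since $\hat b_k-b_k=(\hat A_k-A_k)x+(\hat a_k-a_k)$ is affine in $x$, expanding the residual supplies precisely the $(A_k-\hat A_k)^T(BB^T)^\dagger(A_k-\hat A_k)$ contribution to $Q_k$ and the $(A_k-\hat A_k)^T(BB^T)^\dagger(a_k-\hat a_k)$ contribution to $r_k$, modulo an $x$-independent constant.

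Plugging this back, the $-\log d\tilde\cP^0$ term combines with $\int d\cP\log d\cP$ into ${\rm KL}(\cP\|\tilde\cP^0)$, and the remaining terms assemble into a path integral $\langle\psi,\cP\rangle=\mE_\cP\int_0^1\psi(t,X_t)\,dt$, where $\psi$ collects $\tfrac{\eta}{1+\eta}$ times ($\tfrac1\epsilon\hat V$ and the two $x$-linear trace terms of \eqref{eq:gradietnF} at $z^k_t,\Sigma^k_t,A_k$) together with the residual above. Thus \eqref{eq:iteration} is equivalent to $\min_{\cP\in\hat\Pi(\rho_0,\rho_1)}{\rm KL}(\cP\|\tilde\cP^0)+\langle\psi,\cP\rangle$. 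Parameterizing a generic $\cP\in\hat\Pi(\rho_0,\rho_1)$ as the law of $dX_t=\bar b_k(t,X_t)\,dt+B(t)(u_t\,dt+\sqrt\epsilon\,dW_t)$ --- precisely the dynamics \eqref{eq:iterlinear2} --- the Girsanov identity ${\rm KL}(\cP\|\tilde\cP^0)=\mE\{\int_0^1\tfrac1{2\epsilon}\|u_t\|^2\,dt\}$ from Section~\ref{sec:back} converts this into $\min_u\mE\{\int_0^1[\tfrac1{2\epsilon}\|u_t\|^2+\psi(t,X_t)]\,dt\}$ with $X_0\sim\rho_0$, $X_1\sim\rho_1$. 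Finally, multiplying the objective by $\epsilon$ (to recover $\tfrac12\|u_t\|^2$), dropping the additive constant, expanding $\hat V$ via \eqref{eq:linearizeV}, and collecting the $\tfrac12 x^Tx$- and $x$-coefficients of $\epsilon\psi$ produces exactly $Q_k(t)$ and $r_k(t)$ as stated, completing the identification with \eqref{eq:iterlinear}. Restricting the search to $\hat\Pi(\rho_0,\rho_1)$ rather than to all state-feedback laws costs nothing here, since the optimizer of this linear-quadratic covariance steering problem is an affine feedback whose closed loop is again a Gaussian Markov process.

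The delicate point is the fusion step: one must verify that the weighted \emph{geometric mean} of the two path measures is a genuine Gaussian Markov probability measure --- that the Girsanov densities have the exponential-of-affine-functional form and that the normalizing factor is $x$-independent (hence irrelevant to the argmin) --- and, because $B(t)$ need not be square, that the drift correction $(A_k-\hat A_k)x+(a_k-\hat a_k)$ and its $\tfrac{1}{1+\eta}$-scaling lie in $\mathrm{range}(B(t))$, so that $\cP^u_k$ and $\tilde\cP^0$ are absolutely continuous with respect to $\hat\cP^0$ and the change of measure is legitimate; this holds, e.g., when $B(t)$ is invertible, or inductively from the way $\cP^u_k$ is produced by the previous iteration. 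Everything after that is bookkeeping of the cross terms between $\|\hat b_k\|^2$, $\|b_k\|^2$, $\|\bar b_k\|^2$ and between the pieces of $\psi$ --- which is exactly where the extra entries of $Q_k$ and $r_k$ are born and is the only place genuine care is needed.
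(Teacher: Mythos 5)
Your proposal is correct and rests on exactly the same two ingredients as the paper's proof: the Girsanov energy--KL identity and the weighted-mean-of-squares decomposition that simultaneously produces the interpolated drift $\frac{1}{1+\eta}(A_k+\eta\hat A_k)$ of \eqref{eq:iterlinear2} and the residual $(A_k-\hat A_k)^T(BB^T)^\dagger(A_k-\hat A_k)$ and $(A_k-\hat A_k)^T(BB^T)^\dagger(a_k-\hat a_k)$ contributions to $Q_k$ and $r_k$. The only difference is the order of the bookkeeping: you fuse $\hat\cP^0$ and $\cP^u_k$ into the single reference $\tilde\cP^0$ before parametrizing $\cP$, whereas the paper parametrizes $\cP$ by \eqref{eq:iterlinear2} first and expands ${\rm KL}(\cP\|\hat\cP^0)+\frac{1}{\eta}{\rm KL}(\cP\|\cP^u_k)$ directly, noting that the cross terms in $u$ cancel --- the same quadratic identity applied at a different point (and your explicit attention to the range-of-$B(t)$ absolute-continuity condition is a legitimate technicality the paper leaves implicit).
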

\begin{proof}
The optimization in \eqref{eq:iteration} is equivalent to 
	\begin{align}\nonumber
		\min_{\cP\in \hat\Pi(\rho_0, \rho_1)} &\int [\frac{1}{\epsilon}\hat V+ \frac{1}{2\epsilon} x^T \nabla\tr(\nabla^2 V(z_t^k) \Sigma_t^k)+ 
		\\\nonumber&\hspace{-1.16cm}\frac{1}{2\epsilon} x^T \nabla \tr((BB^T)^{\dagger}(\nabla f(z_t^k)^T- A_k) \Sigma_t^k (\nabla f(z_t^k)-A_k^T))] d\cP
		\\&+{\rm KL} (\cP\| \hat \cP^0) + \frac{1}{\eta} {\rm KL}(\cP\| \cP_k^u). \label{eq:objKL}
	\end{align}
Let $\cP = \cP^u$ be parametrized by \eqref{eq:iterlinear2}, then, by Girsanov theorem \eqref{eq:girsanov}, ${\rm KL} (\cP^u\| \hat \cP^0)$ equals
	\begin{equation}\label{eq:KLP0}
		 \frac{1}{2\epsilon} \mE\! \left\{\!\int_0^1 \!\!\|\frac{1}{1+\eta}(A_k X_t-\!\hat A_k X_t +a_k-\hat a_k)+B u_t\|^2_{(BB^T)^\dagger} dt\right\}
	\end{equation}
and ${\rm KL} (\cP^u\| \cP^u_k)$ equals
	\begin{equation}\label{eq:KLPk}
	\frac{1}{2\epsilon} \mE \!\left\{\!\int_0^1 \|\frac{\eta}{1+\eta}(\hat A_k X_t-\!A_k X_t +\!\hat a_k- a_k)+Bu_t\|^2_{(BB^T)^\dagger} dt\right\}
	\end{equation}	
Combining \eqref{eq:KLP0} and \eqref{eq:KLPk} points to
	\begin{align*}
		&{\rm KL} (\cP^u\| \hat \cP^0)+\frac{1}{\eta}{\rm KL} (\cP^u\| \cP^u_k)
		\\=& \frac{1}{2\epsilon} \mE \left\{\int_0^1 [(1+\frac{1}{\eta})\|u_t\|^2 +\right.
		\\&\left. \frac{1}{1+\eta}\|( A_k X_t-\hat A_k X_t + a_k- \hat a_k)\|^2_{(B(t)B(t)^T)^\dagger} ]dt\right\}.
	\end{align*}
{\red In view of \eqref{eq:linearizeV}, plugging the above into \eqref{eq:objKL} yields the objective function \eqref{eq:iterlinear1}, after removing some constant terms and rescaling with ratio $(1+\eta)/(\epsilon\eta)$.}
\end{proof}

The linear covariance steering problem \eqref{eq:iterlinear} has a closed-form solution (see Section \ref{sec:lineardrift} for more details). Denote the optimal control policy to \eqref{eq:iterlinear} by
	\begin{equation}\label{eq:optKd}
		u_t^\star = K_k(t) X_t + d_k(t),
	\end{equation}
then $\cP_{k+1}^u$ is induced by the closed-loop process 
	\begin{align*}
		dX_t &= \frac{1}{1+\eta}[A_k(t)+\eta \hat A_k(t)] X_t dt +  \frac{1}{1+\eta}[ a_k(t)+\eta\hat a_k(t)] dt 
		\\&+ B(t)(u_t^\star dt+ \sqrt{\epsilon} dW_t).
	\end{align*}
It follows that
	\begin{subequations}\label{eq:updateAa}
	\begin{eqnarray}\label{eq:updateAa1}
		A_{k+1}(t) &=&  \frac{1}{1+\eta}[ A_k(t)+\eta\hat A_k(t)] + B(t)K_k(t)
		\\\label{eq:updateAa2}
		a_{k+1}(t) &=& \frac{1}{1+\eta}[a_k(t)+\eta\hat a_k(t)] + B(t)d_k(t).
	\end{eqnarray}
	\end{subequations}
The above equations \eqref{eq:updateAa} provide an extremely simple rule to update $\cP^u_k$, based on which we present the proximal gradient algorithm (Algorithm \ref{alg:noncov}) for nonlinear covariance steering.

There are many choices for initialization. For instance, one can set $\cP^u_0$ to be the process $dX_t = \sqrt{\epsilon} B(t)dW_t$, which means $A_0(t)\equiv 0$ and $a_0(t)\equiv 0$. An alternative option is a linearization of the prior process \eqref{eq:nonlinearzero}. More precisely, set $z^0_t$ to be the solution to $\dot z^0_t = f(t, z^0_t)$
and $A_0(t) = \nabla f(t, z^0_t)^T$.
The initial $a_0(t)$ can be calculated by
	\[
		a_0(t) = f(t,z^0_t) - A_0(t)z^0_t.
	\]
\begin{algorithm}[ht]
    \caption{Proximal gradient nonlinear covariance steering algorithm}
    \label{alg:noncov}
 \begin{algorithmic}
    \STATE Initialize $A_0, a_0$
    \FOR{$k = 1,2,\ldots$} 
    \STATE Update $A_k$ using \eqref{eq:updateAa1}
    \STATE Update $a_k$ using \eqref{eq:updateAa2}
     \ENDFOR
 \end{algorithmic}
\end{algorithm}
\begin{remark}
Algorithm \ref{alg:noncov} inherits the convergence properties of the proximal gradient algorithm and thus converges to a local solution to \eqref{eq:noncovcontrol} with sublinear rate $\cO (1/k)$. 
\end{remark}
\begin{remark}
The computational cost of Algorithm \ref{alg:noncov} can be divided into two parts: i) linearization to get $\hat A_k, \hat a_k$ and ii) obtaining optimal strategy \eqref{eq:optKd} for the linear covariance steering problem. Both have closed-form and can be computed efficiently, even for high resolution of time-discretization. This is a major advantage over existing works on nonlinear covariance steering. 
\end{remark}

\subsection{Covariance steering for linear systems with drift}\label{sec:lineardrift}
The linear covariance control problem in Theorem \ref{thm:eachiteration} is slightly more general than \eqref{eq:covcontrollinear} due to some extra terms in the dynamics and cost. To provide a closed-form solution to \eqref{eq:iterlinear}, we next extend the results in Section \ref{sec:linearcov} to the following variation of linear covariance steering problems with a drift term in the dynamics and an extra linear term in the cost
	\begin{subequations}\label{eq:covcontrollineardrift}
	\begin{eqnarray}
		\min_{u} \!\!\!&&\!\!\!\! \mE \left\{\int_0^1 [\frac{1}{2}\|u_t\|^2 + \frac{1}{2} X_t^T Q(t) X_t + X_t^T r(t)]dt\right\}
		\\\!\!\!&&\!\!\!\! dX_t = A(t)X_t dt +a(t)dt+\! B(t)(u_tdt+ \!\sqrt{\epsilon} dW_t)
		\\\!\!\!&&\!\!\!\! X_0 \sim \cN(m_0, \Sigma_0),\quad X_1 \sim \cN(m_1, \Sigma_1).
	\end{eqnarray}
	\end{subequations}
By linearity, the mean and the covariance can be controlled separately. Apparently, the covariance control part is the same as Problem \eqref{eq:covcontrollinear} since both the dynamics of covariance and the cost related to covariance in \eqref{eq:covcontrollineardrift} are exactly the same as \eqref{eq:covcontrollinear}.

The deterministic control for the mean is slightly different from \eqref{eq:covcontrollinear}. Let $v_t$ be the mean of the control, then it is associated with the deterministic optimal control problem
	\begin{subequations}\label{eq:deterministiccontrol}
	\begin{eqnarray}
		\min_{v} && \int_0^1 [\frac{1}{2}\|v_t\|^2 + \frac{1}{2} x_t^T Q(t) x_t+x_t^T r(t)]dt
		\\&& \dot x_t = A(t)x_t + a(t)+B(t)v_t
		\\&& x_0 =m_0, \quad x_1 = m_1.
	\end{eqnarray}
	\end{subequations}
The above problem can be solved using the Pontryagin's principle. More specifically, it amounts to solving the differential equations
	\begin{equation}\label{eq:xlambda}
		\left[\begin{matrix} \dot x_t \\ \dot \lambda_t\end{matrix}\right]
		=
		\left[\begin{matrix} A(t) & -B(t)B(t)^T\\ -Q(t) & -A(t)^T\end{matrix}\right]
		\left[\begin{matrix}  x_t \\ \lambda_t\end{matrix}\right]
		+
		\left[\begin{matrix} a(t) \\ -r(t)\end{matrix}\right]
	\end{equation}
with boundary condition $x_0 = m_0, x_1 = m_1$. Using the notation in Section \ref{sec:back} we obtain
	\begin{equation*}
		\left[\begin{matrix}  x_1 \\ \lambda_1\end{matrix}\right]\!=\!\!
		\left[
        \begin{matrix}
        \Phi_{11} & \Phi_{12}\\
        \Phi_{21} & \Phi_{22}
        \end{matrix}\right]\!
        \left[\begin{matrix}  x_0 \\ \lambda_0\end{matrix}\right]
        		\!+\!\int_0^1 \left[\begin{matrix}
        \Phi_{11}(1,\tau) \!\!\!&\!\! \Phi_{12}(1,\tau)\\
        \Phi_{21}(1,\tau) \!\!\!&\!\! \Phi_{22}(1,\tau)
        \end{matrix}\right]\left[\begin{matrix} a(\tau) \\ -r(\tau)\end{matrix}\right]d\tau.
	\end{equation*}
It follows
	\begin{equation*}
		\lambda_0 = \Phi_{12}^{-1}\left(m_1\!\!-\!\Phi_{11}m_0\!\!-\!\!\int_0^1 (\Phi_{11}(1,\tau)a(\tau)\!\! -\!\Phi_{12}(1,\tau)r(\tau))d\tau\right).
	\end{equation*}
Plugging it back to \eqref{eq:xlambda} gives the expression for $\lambda_t$. The solution to the deterministic control problem \eqref{eq:deterministiccontrol} is $v_t^\star=-B(t)^T \lambda_t$. The optimal control strategy for the covariance steering problem \eqref{eq:covcontrollineardrift} is
	\[
		u_t^\star = -B(t)^T\Pi(t) (X_t-x_t^\star) + v_t^\star.
	\] 

\subsection{Construct optimal control}\label{sec:control}
Upon convergence, the total feedback control policy can be recovered as follows. Denote $A^\star, a^\star$ the limit point of $\{A_k, a_k\}$. 
Let $z_t^\star, \Sigma_t^\star$ be the solutions to
	\begin{eqnarray*}
		\dot z_t^\star &=& A^\star(t) z_t^\star + a^\star(t), 
		\\
		\dot \Sigma_t^\star &=& A^\star(t)\Sigma_t^\star +\Sigma_t^\star A^\star(t)^T + \epsilon B(t)B(t)^T,
	\end{eqnarray*}
then they satisfy that $z_0^\star = m_0, z_1^\star = m_1$ and $\Sigma_0^\star = \Sigma_0, \Sigma_1^\star = \Sigma_1$. Denote the linearization of the prior dynamics \eqref{eq:nonlinearzero} along $z_t^\star$ as
	\begin{equation}\label{eq:GM}
		dX_t = \hat A^\star(t) X_t dt + \hat a^\star(t) dt + \sqrt{\epsilon} B(t) dW_t,
	\end{equation}
then
	\begin{subequations}\label{eq:zS}
	\begin{eqnarray}
		\hat A^\star(t) &=& \nabla f(t, z^\star_t)^T
		\\
		\hat a^\star(t) &=& f(t,z^\star_t) - \nabla f(t, z^\star_t)^T z^\star_t.
	\end{eqnarray}
	\end{subequations}

The distribution $\cP^\star$ over the path space $\Omega$ associated with 
	\begin{equation}\label{eq:Pstar}
		dX_t = A^\star(t) X_t dt + a^\star(t) dt + \sqrt{\epsilon} B(t) dW_t
	\end{equation}
is the solution to our covariance steering problem \eqref{eq:noncovcontrol}. To retrieve the optimal control strategy, 
consider the optimization 
	\begin{equation}\label{eq:finalpro}
		\min_{\cP\in\hat\Pi(\rho_0, \rho_1)} G(\cP) + \langle \frac{\delta F}{\delta \cP}(\cP^\star), \cP\rangle. 
	\end{equation}
It shares the same minimizer, which is $\cP^\star$, as 
	\begin{equation}\label{eq:iter}
		\min_{\cP\in\hat\Pi(\rho_0, \rho_1)} G(\cP) + \langle \frac{\delta F}{\delta \cP}(\cP^\star), \cP\rangle + \frac{1}{\eta} {\rm KL}( \cP\| \cP^\star).
	\end{equation}
Indeed, since $\cP^\star$ is the minimizer of \eqref{eq:iter} and ${\rm KL}( \cP\| \cP^\star)$ vanishes when $\cP = \cP^\star$, $\cP^\star$ must be a minimizer of \eqref{eq:finalpro} as well. To retrieve the final optimal control from \eqref{eq:finalpro}, we reformulate it as a linear covariance control problem as follows. The proof is similar to that of Theorem \ref{thm:eachiteration} and is thus omitted. 
\begin{prop}\label{prop:retrieve}
The optimization \eqref{eq:finalpro} amounts to the linear covariance steering problem
	\begin{subequations}
	\begin{eqnarray}
		\min_{u}\!\!\!\! &&\!\!\!\! \mE \left\{\int_0^1 [\frac{1}{2}\|u_t\|^2 + \frac{1}{2} X_t^T Q^\star(t) X_t + X_t^T r^\star(t)]dt\right\}
		\\ \!\!\!\! &&\!\!\!\!  \hspace{-0.2cm}dX_t = \hat A^\star(t)X_t dt + \hat a^\star(t)dt +\! B(t)(u_tdt\!+\! \sqrt{\epsilon} dW_t)
		\\ && X_0 \sim \rho_0,\quad X_1 \sim \rho_1,
	\end{eqnarray}
	\end{subequations}
where $Q^\star(t) = \nabla^2 V(z^\star_t)$
and
	\begin{align*}
		r^\star(t) =& \nabla V(z^\star_t)+\frac{1}{2}[\nabla \tr(\nabla^2V(z^\star_t)\Sigma^\star_t)-2\nabla^2V(z^\star_t)z^\star_t
		\\&\hspace{-0.5cm}+\nabla\tr( (B(t)B(t)^T)^{\dagger}(\nabla f^T -A^\star)\Sigma^\star_t(\nabla f-A^{\star T}))].
	\end{align*}
\end{prop}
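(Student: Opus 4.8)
The plan is to mirror the proof of Theorem~\ref{thm:eachiteration}, specialized to the case in which the proximal (Bregman) term is absent; equivalently, \eqref{eq:finalpro} can be regarded as the $\eta\to\infty$ limit of \eqref{eq:iteration} with the linearization anchored at $z^\star_t$ rather than $z^k_t$, and I would argue directly rather than through a limiting argument. First I would evaluate $\frac{\delta F}{\delta\cP}(\cP^\star)$ by applying Lemma~\ref{lem:variationP} with the Gaussian Markov process taken to be \eqref{eq:Pstar}, i.e.\ with $A=A^\star$, $a=a^\star$, mean $z^\star_t$, and covariance $\Sigma^\star_t$. Substituting these starred quantities into \eqref{eq:gradietnF}, the variation equals $\frac1\epsilon\hat V-\log d\hat\cP^0$ plus the two $x$-quadratic corrections $\frac1{2\epsilon}x^T\nabla\tr(\nabla^2V(z^\star_t)\Sigma^\star_t)$ and $\frac1{2\epsilon}x^T\nabla\tr((BB^T)^\dagger(\nabla f(z^\star_t)^T-A^\star)\Sigma^\star_t(\nabla f(z^\star_t)-A^{\star T}))$, where $\hat V$ is the quadratic expansion \eqref{eq:linearizeV} of $V$ along $z^\star_t$ and $\hat\cP^0$ is the Gaussian Markov law \eqref{eq:linearizeP0}, namely the one associated with \eqref{eq:GM}--\eqref{eq:zS}. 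Substituting into \eqref{eq:finalpro} and observing that $G(\cP)=\int d\cP\log d\cP$ combines with the $-\log d\hat\cP^0$ term into ${\rm KL}(\cP\|\hat\cP^0)$, the objective becomes $\int[\frac1\epsilon\hat V+\frac1{2\epsilon}x^T(\cdots)]\,d\cP+{\rm KL}(\cP\|\hat\cP^0)$ minimized over $\cP\in\hat\Pi(\rho_0,\rho_1)$.

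Since any $\cP\in\hat\Pi(\rho_0,\rho_1)$ with finite relative entropy to $\hat\cP^0$ must share the diffusion $\sqrt\epsilon B$ and differ only by a drift lying in the range of $B$, I would then parametrize it as the law $\cP^u$ of $dX_t=\hat A^\star(t)X_t\,dt+\hat a^\star(t)\,dt+B(t)(u_t\,dt+\sqrt\epsilon\,dW_t)$ for some state-feedback $u$; by the Girsanov identity \eqref{eq:girsanov} and the full-column-rank identity $\|Bu\|^2_{(BB^T)^\dagger}=\|u\|^2$, this gives ${\rm KL}(\cP^u\|\hat\cP^0)=\frac1{2\epsilon}\mE\int_0^1\|u_t\|^2\,dt$. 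Multiplying the objective by $\epsilon$ and expanding $\hat V$ via \eqref{eq:linearizeV}, the part quadratic in $X_t$ is $\frac12X_t^T\nabla^2V(z^\star_t)X_t$, which identifies $Q^\star(t)=\nabla^2V(z^\star_t)$; the part linear in $X_t$ gathers $\nabla V(z^\star_t)$ from the linear term of $\hat V$, $-\nabla^2V(z^\star_t)z^\star_t$ from completing the square in its quadratic term, and the two $\frac12\nabla\tr(\cdots)$ corrections above, which together are exactly $r^\star(t)$; the remaining state- and control-independent terms are constant and are discarded. This produces the asserted linear covariance steering problem, which admits the closed-form solution of Section~\ref{sec:lineardrift} (its covariance data being unchanged). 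Finally, because $\cP^\star$ minimizes \eqref{eq:iter} and ${\rm KL}(\cP\|\cP^\star)$ vanishes at $\cP=\cP^\star$, the same $\cP^\star$ also minimizes \eqref{eq:finalpro}, so the controller recovered from this linear problem is the optimal policy sought.

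The step I expect to be the main obstacle is the parametrization just used: one must justify that restricting to laws of the form $\cP^u$ is without loss of generality (only these have finite ${\rm KL}$ divergence to $\hat\cP^0$), and that the objects $\hat V$ and $\hat\cP^0$ implicit in the definition \eqref{eq:FP} of $F$ are precisely the expansions taken along the mean $z^\star_t$ of the candidate $\cP^\star$, so that Lemma~\ref{lem:variationP} may be invoked verbatim at $\cP^\star$. Everything else is the same bookkeeping as in the proof of Theorem~\ref{thm:eachiteration}, with the scalars $\frac{\eta}{1+\eta}$, $\frac{\eta}{(1+\eta)^2}$, $\frac1{1+\eta}$ replaced by their $\eta\to\infty$ limits $1$, $0$, $0$.
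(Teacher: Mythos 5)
Your proposal is correct and follows exactly the route the paper intends: the paper omits the proof of Proposition~\ref{prop:retrieve}, stating only that it is "similar to that of Theorem~\ref{thm:eachiteration}," and your argument is precisely that proof with the proximal term dropped (equivalently the $\eta\to\infty$ specialization), the linearization anchored at $z^\star_t$, and the Girsanov/KL bookkeeping carried out to recover $Q^\star$ and $r^\star$. The coefficient accounting (the $\frac{\eta}{(1+\eta)^2}$ cross-terms vanishing, the $\frac{1}{2}\nabla\tr(\cdot)$ corrections from Lemma~\ref{lem:variationP} evaluated at $\cP^\star$, and the $-\nabla^2 V(z^\star_t)z^\star_t$ term from expanding $\hat V$) all checks out.
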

	
Denote by 
	\begin{equation}\label{eq:totalcontrol}
		u_t^\star = K^\star(t) X_t + d^\star(t)
	\end{equation}
the optimal control for the linear covariance control problem in Proposition \ref{prop:retrieve}, then it is also the optimal control strategy to the nonlinear covariance steering problem \eqref{eq:noncovcontrol}.

\section{Generalizations}\label{sec:extension}
The nonlinear covariance steering algorithm can be extended to the general control-affine system 
	\begin{equation}\label{eq:nonlineargeneral}
		dX_t = f(t,X_t)dt + g(t,x) (u_t dt + \sqrt{\epsilon} d W_t).
	\end{equation}
The goal is again to approximately solve the density control problem
	\begin{subequations}\label{eq:covcontrolN}
	\begin{eqnarray}\label{eq:covcontrolN1}
		\min_{u_t} && \mE \left\{\int_0^1 [\frac{1}{2}\|u_t\|^2 + V(X_t)]dt\right\}
		\\\label{eq:covcontrolN2}
		&&dX_t = f(t,X_t)dt + g(t,X_t) (u_t dt + \sqrt{\epsilon} d W_t)
		\\\label{eq:covcontrolN3}
		&& 
		X_0 \sim \cN(m_0,\Sigma_0),\quad X_1 \sim \cN(m_1,\Sigma_1).
	\end{eqnarray}
	\end{subequations}

As before, the above can be reformulated as an optimization \eqref{eq:noncovcontrol} over $\cP^u$ and can be solved using the proximal gradient algorithm with iteration \eqref{eq:iteration}. Again, each iteration of \eqref{eq:iteration} is a covariance control problem for linear dynamics. However, the explicit realization of each iteration is slightly different. 

The probability measure $\cP^u_k$ is a Gaussian Markov process
	\begin{equation}\label{eq:GMk}
		dX_t = A_k(t) X_t dt + a_k(t) dt + \sqrt{\epsilon} g(t,z_t^k) dW_t.
	\end{equation}
where the mean $z^k$ and the covariance $\Sigma^k$ satisfy
	\begin{subequations}\label{eq:zSk}
	\begin{eqnarray}
		\dot z^k_t &=& A_k(t)z^k_t +a_k(t)
		\\
		\dot \Sigma_t^k &=& A_k(t)\Sigma_t^k +\Sigma_t^k A_k(t)^T + \epsilon g(t,z_t^k)g(t,z_t^k)^T
	\end{eqnarray}
	\end{subequations}
The above equations \eqref{eq:GMk}-\eqref{eq:zSk} should be compared with \eqref{eq:GM}-\eqref{eq:zS}. The difference between them are due to the fact $g(t,x)$ depends on the value of state. In \eqref{eq:GMk}-\eqref{eq:zSk}, we use the value of $g$ at the mean $z_t^k$ as an approximation.

Following the same analysis as in the Section \ref{sec:algorithm}, we obtain the explicit updates
	\begin{subequations}\label{eq:updateAaN}
	\begin{eqnarray}\label{eq:updateAaN1}
		A_{k+1}(t) &=&  \frac{1}{1+\eta}[A_k(t) + \eta \hat A_k(t)] + g(t,z_t^k)K_k(t)
		\\\label{eq:updateAaN2}
		a_{k+1}(t) &=& \frac{1}{1+\eta}[a_k(t)+ \eta \hat a_k(t)] + g(t,z_t^k)d_k(t).
	\end{eqnarray}
	\end{subequations}

Once $\{A_k, a_k\}$ converges to $(A^\star, a^\star)$, we can follow exactly the same steps as in Section \ref{sec:control} (after replacing $B(t)$ by $g(t, z^\star_t)$) to retrieve the final optimal affine controller. 


\section{Numerical examples}\label{sec:example}
{\red
In this section we present several examples to illustrate our algorithm. All the experiments are carried out using a PC with 64GB RAM and an Intel Core i9 CPU.

\subsection{Double integrator with drag}
We consider the same nonlinear dynamical system used in \cite{RidOkaTsi19}
    \begin{subequations}\label{eq:double_integrator}
	\begin{align}   
			d x_1&=x_2 d t,\\
			d x_2&=(u - c_d \lVert x_2 \rVert) d t+ \sqrt{\epsilon} d W_t,
	\end{align}
\end{subequations}
where $c_d$ represents drag coefficient. To compare the proposed algorithm with the \textit{iterative Covariance Steering (iCS)} algorithm \cite{RidOkaTsi19}, we conduct experiments for the two algorithms under the same settings but removing the chance constraints and trust region constraints in iCS. We choose $c_d=0.005$ and $\epsilon=0.1$, and also apply a time scaling to map time interval $[0,1]$ to $[0, T]$. We choose the initial mean and covariance conditions as $m_0 = \left[\begin{matrix}1, 8, 2, 0\end{matrix}\right]^T, \quad \Sigma_0 = 0.01 I$
and the target mean and covariance is chosen to be $m_T = \left[\begin{matrix}1, 2, -1, 0\end{matrix}\right]^T, \Sigma_T = 0.1 I$.
In addition to control energy, in this experiment we add also a quadratic state cost $V(x_t)=(x_t-z^k_t)^TQ_t(x_t-z^k_t), Q_t=0.1I$ in \eqref{eq:covcontrol} as in \cite{RidOkaTsi19}. The stopping criteria for the experiments is when the average over all time steps of relative Frobenius norm error of two consecutive $A_k$ and $a_k$ is less than a predefined threshold. Fig. \ref{fig:double_integ_pos_vel} shows the resulting position and velocity distributions and sampled trajectories. We display the convergence time for different time discretizations in Tab. \ref{tab:compare_iCS}. In the same settings, our proposed algorithm converges much faster (more than 1000 times speedup).
We note that both algorithms converge to a locally minimum solution, and the reason for the convergence rate difference lies in that the baseline algorithm relies on optimization over discrete dynamics which leads to a constrained semi-definite program at each iteration while our proposed method optimizes directly over continuous dynamics at each iteration which has a closed-form solution.
Our algorithm also has the advantage that its complexity increases linearly with time discretization.
In terms of stability, our algorithm is guaranteed to have a solution that strictly satisfies the boundary conditions at each iteration if the linearized system is controllable. 

\begin{table}[]
\centering
\begin{tabular}{|c|cc|cc|}
\hline
    & \multicolumn{2}{c|}{iCS} & \multicolumn{2}{c|}{ours} \\ 
\hline
Time discretization &   \multicolumn{1}{c|}{25} & 50     & \multicolumn{1}{c|}{25} & \textbf{1000} \\ 
\hline
Solving time   & \multicolumn{1}{c|}{126.1212} & 1528.1  & \multicolumn{1}{c|}{\textbf{0.1184}} & \textbf{0.2741} \\ 
\hline
Cost   & \multicolumn{1}{c|}{31.3198} & 30.7929  & \multicolumn{1}{c|}{2.9945} & 2.9283 \\ 
\hline
\end{tabular}
\caption{Convergence time and cost comparison between the proposed method and the iCS algorithm proposed in \cite{RidOkaTsi19} for the dynamics \eqref{eq:double_integrator}. The costs are computed using optimal controlled dynamics obtained from the two algorithms using Monte Carlo.
}
    \label{tab:compare_iCS}
\end{table}
\begin{figure}[tb]
    \centering
    \begin{subfigure}[b]{0.24\textwidth}
    \centering
    \includegraphics[width=1\textwidth]{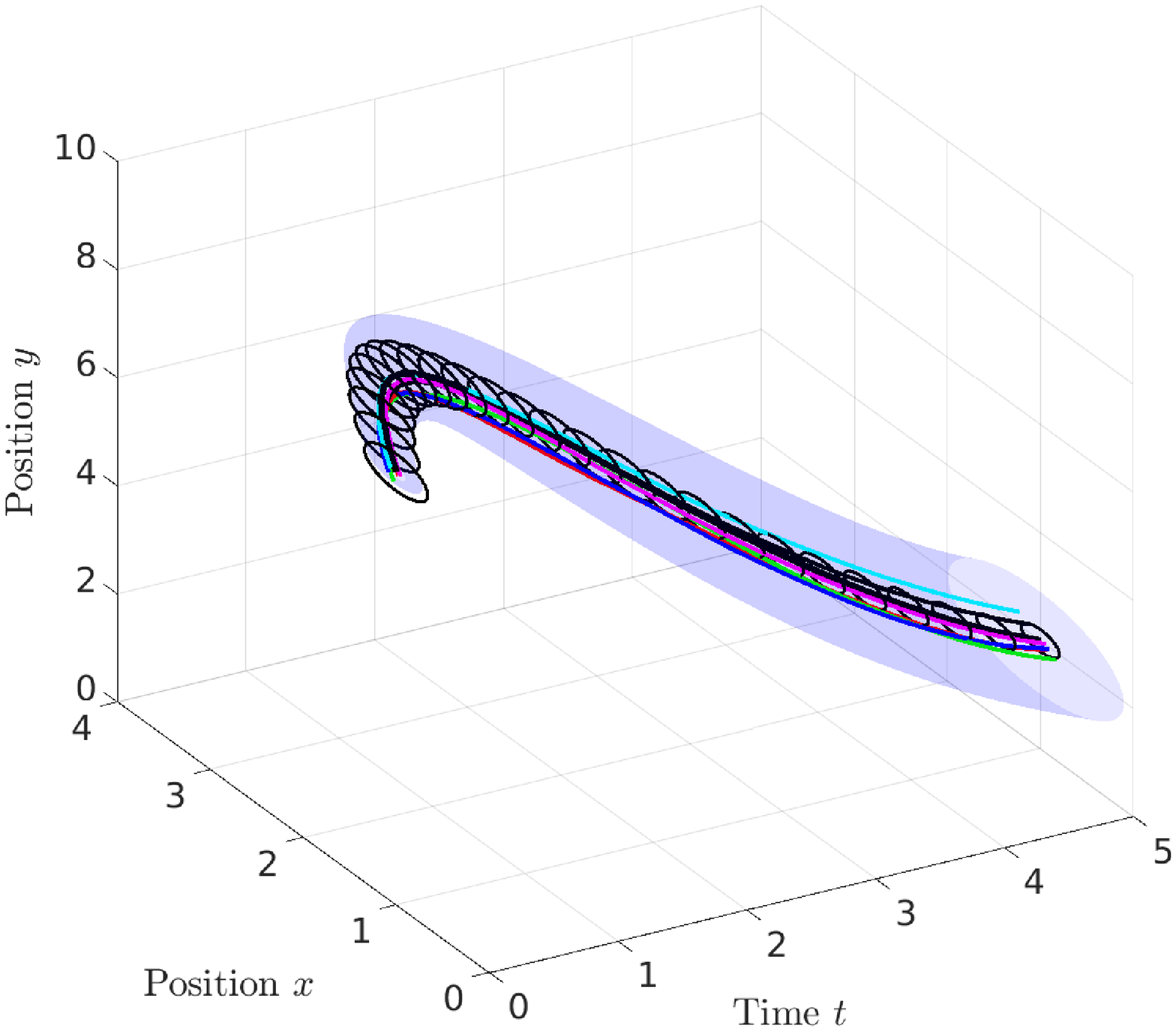}
    \caption{position}
    \label{fig:double_integ_pos_vel_a}
    \end{subfigure}
    \hfill
    \begin{subfigure}[b]{0.24\textwidth}
    \centering
    \includegraphics[width=1\textwidth]{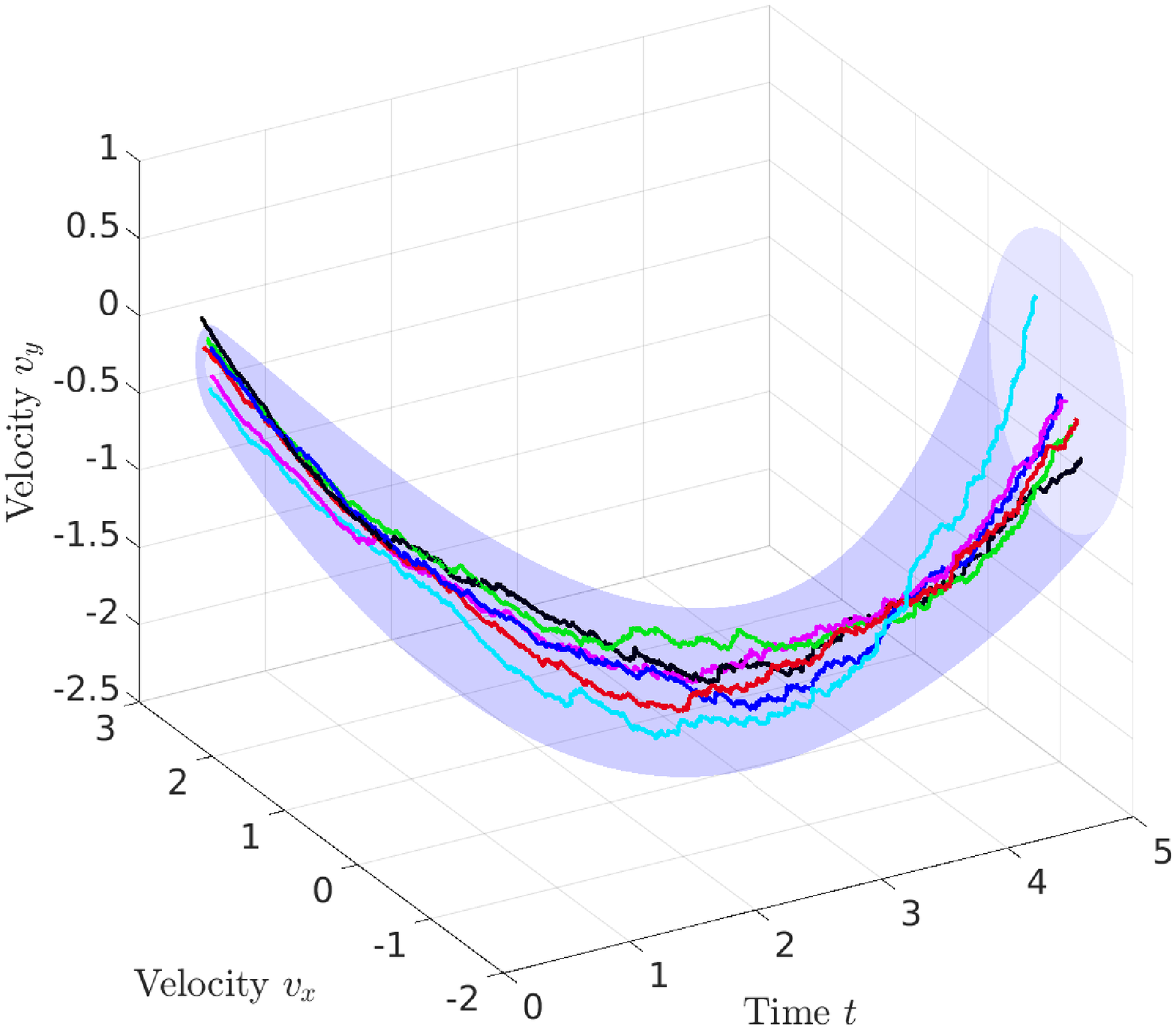}
    \caption{velocity}
         \label{fig:double_integ_pos_vel_b}
    \end{subfigure}
    \caption{Covariance and sampled position and velocity trajectories plot of the double integrator with drag dynamics \eqref{eq:double_integrator}, with a time scaling to $T = 5$ and time discretization $N=1000$. We plot the covariances obtained from iCS for $N=25$ in Fig. \ref{fig:double_integ_pos_vel_a} in black ellipsoids. In contrast to iCS \cite{RidOkaTsi19}, our result satisfies the final time constraints.}
    \label{fig:double_integ_pos_vel}
\end{figure}


\subsection{3-link manipulator}
We next consider a 3-link manipulator example. By this example, we show that the proposed algorithm is effective in controlling a system governed by a manipulator equation in the form
\begin{equation}
    M(q)\ddot{q} + C(q,\dot q) + g(q) = \tau+\sqrt{\epsilon}dW_t,
\end{equation}
where $q=[\theta_1, \theta_2, \theta_3]^T$ is the joint angles, $M(q)$ is the mass matrix, $C(q,\dot q)$ is the Coriolis matrix, $g(q)$ is the gravity term and $\tau$ is the torque input applied on joints. 
Denote the state of the system as
$X = [q,\dot q]^T$ and the input as $u=\tau$, 
then we have the nonlinear control-affine system representation for the manipulator following (8)
\begin{equation}\label{eq:3dof_eom}
    dX_t =
    \left[\begin{matrix}
    \dot q \\ M^{-1}(q)(-C(q, \dot q)\!-\!g(q))
    \end{matrix}\right]dt
    +\!
    \left[\begin{matrix}
    0 \\ M^{-1}(q)(u_tdt\!+\!\sqrt{\epsilon}dW_t)
    \end{matrix}\right]
\end{equation}

We choose the number of discretizations to 1000 with a time span $T=1$ and noise intensity $\epsilon=0.1$. We steer the system from initial mean $m_0=[0,0,0,0,0,0]^T$ and covariance $\Sigma_0=0.05I$, to $m_T=[-\frac{\pi}{2},\frac{\pi}{2},\frac{\pi}{2},0,0,0]^T$ and covariance $\Sigma_T=0.01I$.
With convergence error set to $10^{-5}$, it takes around $41$ iterations and $6.2s$ on average to converge.
Considering the underlying complexity of dynamics and the number of discretizations, the experiment shows the proposed algorithm is effective in solving the covariance steering problem for manipulator systems. The mean and sampled trajectories of the nonlinear stochastic system \eqref{eq:3dof_eom} under optimal feedback control policy are shown in Fig. \ref{fig:3dof_velpos}. To illustrate the evolution of covariance and simulated trajectory, we show the covariance ellipsoid of joint angles in Fig. \ref{fig:3dof_funnel}. The initial covariance shrinks as time goes on and reaches the target covariance. The simulated trajectories started in randomly sampled states from the initial distribution are bounded inside the $3\sigma$-confident ellipsoid at each time stamp $t$. The trajectory simulation for 3-DOF manipulator is shown in Fig. \ref{fig:3dof_motion}.

\begin{figure}[tb]
    \centering
    \begin{subfigure}[b]{0.24\textwidth}
    \centering
    \includegraphics[width=1\textwidth]{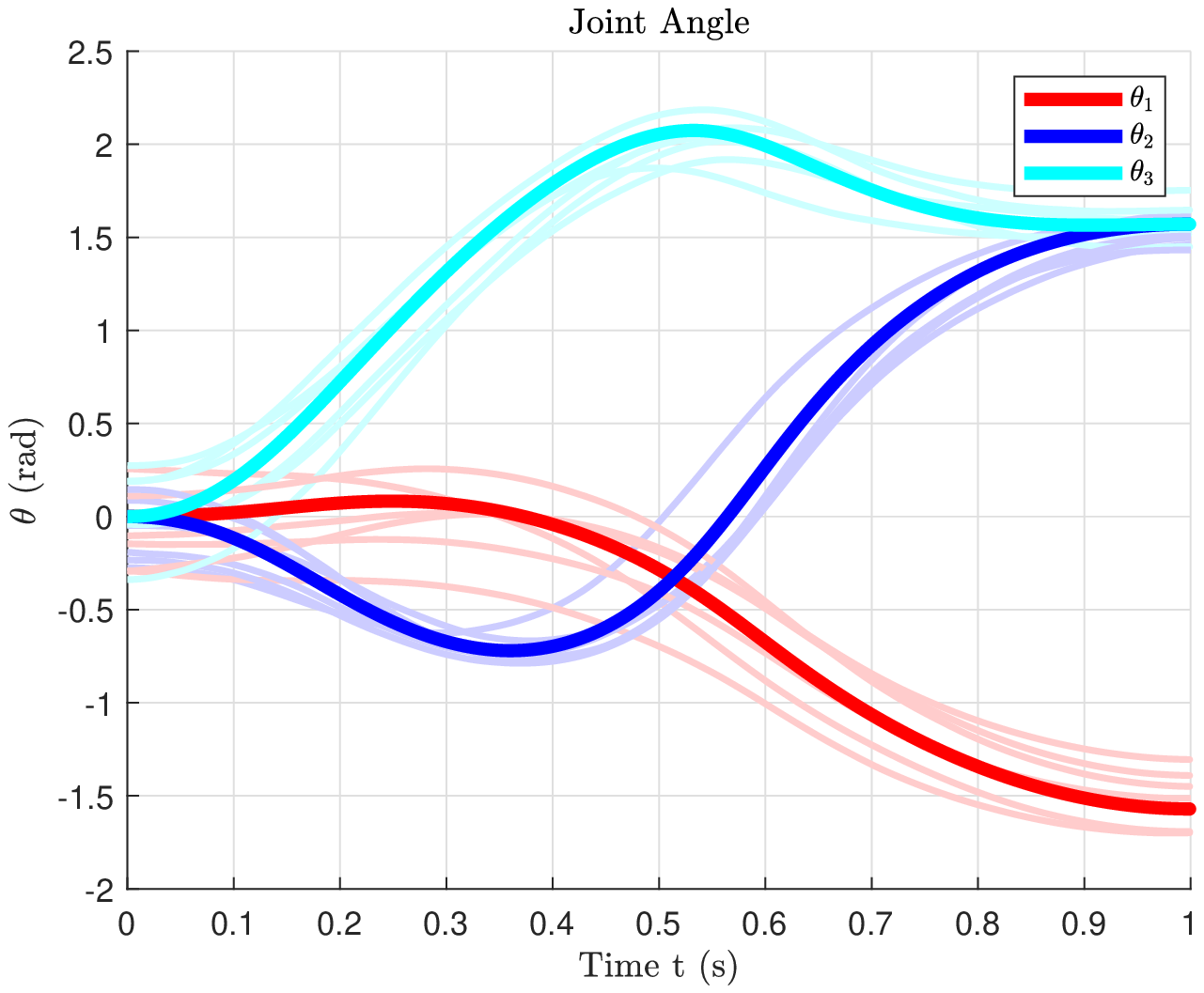}
    \caption{position}
    \label{fig:3dof_velpos_a}
    \end{subfigure}
    \hfill
    \begin{subfigure}[b]{0.24\textwidth}
    \centering
    \includegraphics[width=1\textwidth]{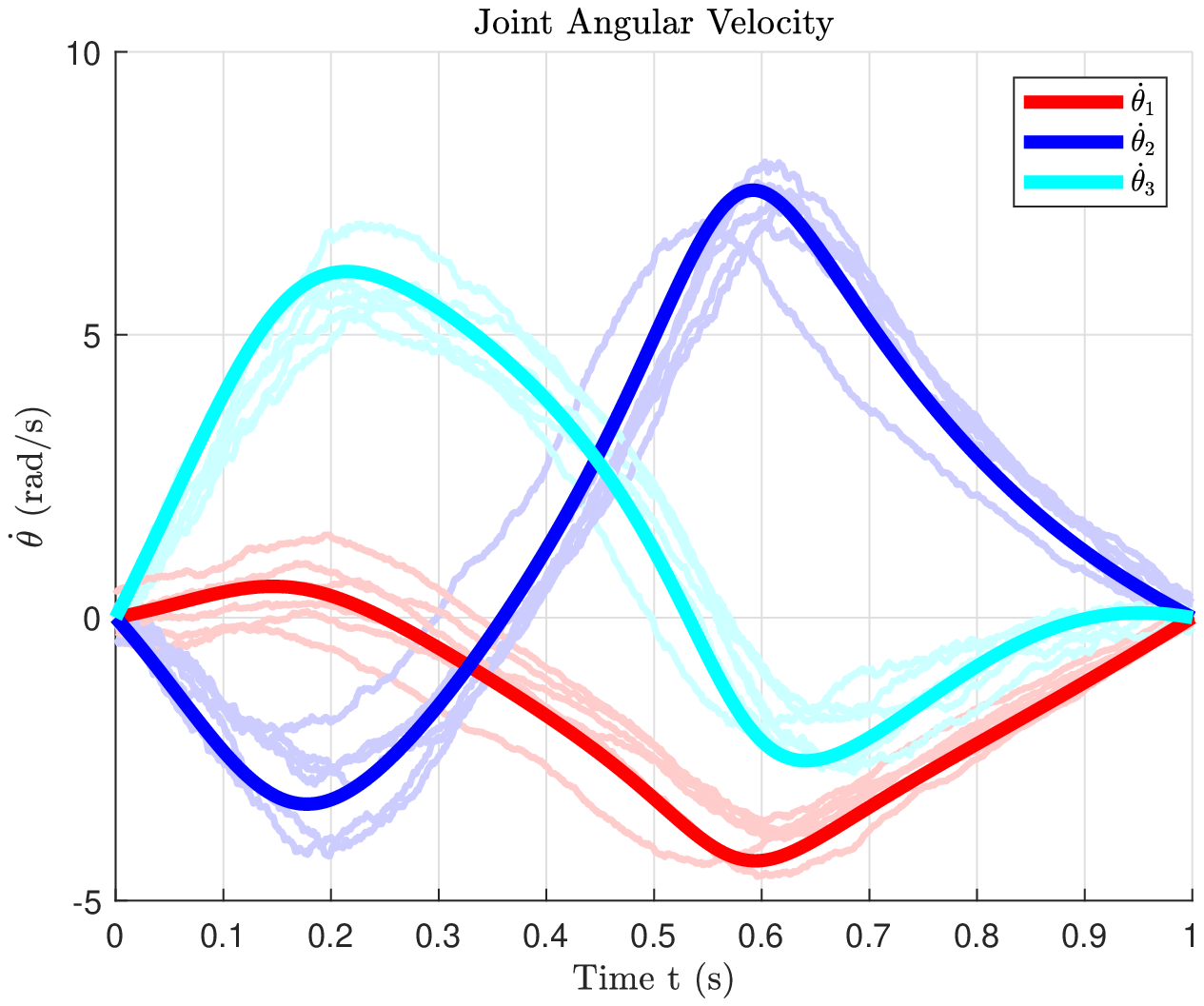}
    \caption{velocity}
         \label{fig:3dof_velpos_b}
    \end{subfigure}
    \caption{Joint angle and velocity trajectory plot of the 3-DOF robot arm. The trajectories are simulated with full dynamics and noise injected from torque inputs. Mean trajectories of joint angles and velocities are highlighted.}
    \label{fig:3dof_velpos}
\end{figure}

\begin{figure}
    \centering
    \includegraphics[width=0.4\textwidth]{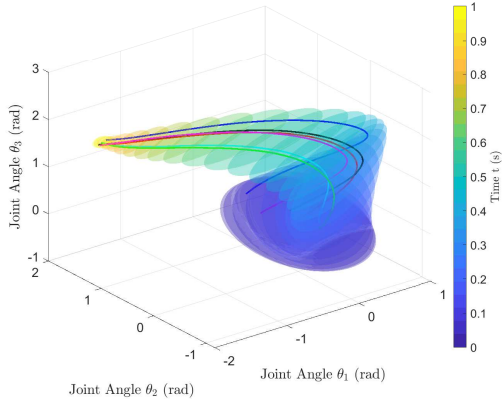}
    \caption{$3\sigma$ covariance ellipsoid trajectory in joint space. Each joint angle  ellipsoid is retrieved from covariance matrix $\Sigma_t^{k}$ which satisfies (51b). The position of the ellipsoid is the mean $z_t^k$ at time t. 
    }
    \label{fig:3dof_funnel}
\end{figure}

\begin{figure}
    \centering
    \includegraphics[width=0.4\textwidth]{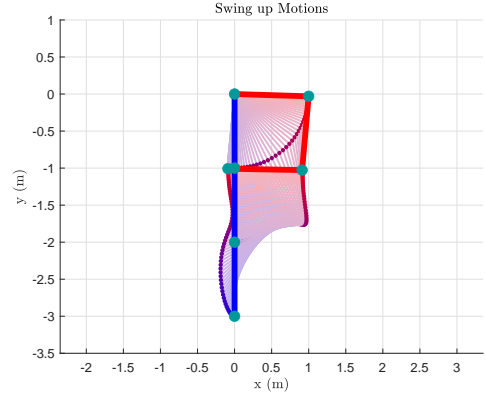}
    \caption{Simulated trajectory of 3-DOF manipulator. The starting position is $X_0=[0,0,0,0,0,0]^T$ (hanging vertically, shown in blue), and the generated control policy steers the system to the vicinity of target state $X_T=[-\frac{\pi}{2},\frac{\pi}{2},\frac{\pi}{2},0,0,0]^T$ (shown in red). The final position is not perfectly aligned with the target due to the stochasticity of the system.} 
    \label{fig:3dof_motion}
\end{figure}
}

\section{Conclusion}\label{sec:conclusion}
In this paper we presented a new approach to covariance steering problems for control-affine systems. We showed that this nonlinear covariance control problem can be reformulated as an optimization over the space of probability distributions of the trajectories. We then developed an efficient algorithm based on the proximal gradient algorithm. Each proximal iteration in the algorithm requires solving a linear covariance control problems, whose solutions exist in closed form. One distinguishing feature of this distributional formulation of nonlinear covariance steering and the resulting algorithm is that the optimal control is obtained for the continuous-time problems directly and the complexity of the algorithm is insensitive to the resolution of time-discretization. One potential future research direction is to extend the framework to account for hybrid dynamics. 

\appendix 

\subsection{Proof of Lemma \ref{lem:variationP}}
By definition \eqref{eq:FP}, the first order expansion of $F(\cP+ \delta \cP) - F(\cP)$ is
	\begin{eqnarray}\label{eq:variationF}
		&& \langle \frac{1}{\epsilon}\hat V (\cP+\delta \cP)-\log d\hat \cP^0(\cP+\delta \cP), \cP+\delta \cP\rangle
		\\&&-  \langle \frac{1}{\epsilon}\hat V (\cP)-\log d\hat \cP^0(\cP), \cP\rangle \nonumber
		\\\nonumber &\approx&\langle \frac{1}{\epsilon}\hat V (\cP)-\log d\hat \cP^0(\cP),\delta \cP\rangle 
		\\&&+\langle \frac{1}{\epsilon}\hat V (\cP+\delta \cP)-\frac{1}{\epsilon}\hat V (\cP)-\log \frac{d\hat \cP^0(\cP+\delta \cP)}{d\hat \cP^0(\cP)}, \cP\rangle. \nonumber
	\end{eqnarray}
Here we use the notations $\hat V(\cP), \hat\cP^0(\cP)$ to emphasize the dependence of $\hat V, \hat\cP^0$ on $\cP$. 
By \eqref{eq:linearizeV}, $\langle \hat V (\cP+\delta \cP)-\hat V (\cP), \cP\rangle$ is approximately
	\begin{eqnarray*}
		&& \langle V(z_t+\delta z_t)-V(z_t)-\delta z_t^T \nabla V(z_t) ,\cP\rangle
		\\&+& \langle \frac{1}{2}(x^T-z_t^T-\delta z_t^T) \nabla^2 V(z_t+\delta z_t) (x-z_t-\delta z_t)
		\\&-&\frac{1}{2}(x^T-z_t^T) \nabla^2 V(z_t) (x-z_t),\cP\rangle.
	\end{eqnarray*}
We only keep first order approximations {\red and have used the fact that $z_t$ is the mean of $\cP$.} The first term is clearly $0$ after ignoring high order information. The second term is approximately
{\red
	\begin{eqnarray*}
		&&\langle \frac{1}{2}(x^T-z_t^T) \nabla^2 V(z_t+\delta z_t) (x-z_t)
		\\&&-\frac{1}{2}(x^T-z_t^T) \nabla^2 V(z_t) (x-z_t),\cP\rangle
             \\ =&& \frac{1}{2} \int_0^1[\tr (\nabla^2 V(z_t+\delta z_t) \Sigma_t)-\tr (\nabla^2 V(z_t) \Sigma_t)] dt
		\\\approx&& \frac{1}{2}\int_0^1[\delta z_t^T \nabla\tr(\nabla^2 V(z_t) \Sigma_t)] dt.
	\end{eqnarray*}
 }
It follows that 
	\begin{equation}\label{eq:Vhat}
		\langle \hat V (\cP+\delta \cP)-\hat V (\cP), \cP\rangle \approx \frac{1}{2}\langle x^T \nabla\tr(\nabla^2 V(z_t) \Sigma_t), \delta \cP\rangle.
	\end{equation}
 {\red
By Girsanov theorem \eqref{eq:girsanov}, 
	\[
		\frac{d \cP}{d\hat \cP^0(\cP)} = \exp \left[\int_0^1 \frac{1}{2\epsilon} \|U\|^2_{(BB^T)^\dagger} dt + \frac{1}{\sqrt{\epsilon}}\langle U, B dW_t\rangle_{(BB^T)^\dagger} \right]
	\]
where 
	\[
		U = AX_t +a -\nabla f(z_t)^T X_t -f(z_t)+\nabla f(z_t)^T z_t.
	\]
A similar expression can be obtained for $d\cP / d\hat \cP^0(\cP+\delta \cP)$. Combining them and using first order approximation yields
	\begin{eqnarray}\label{eq:P0hat}
		&&\langle\log \frac{d\hat \cP^0(\cP)}{d\hat \cP^0(\cP+\delta \cP)}, \cP\rangle
		\\\approx&& \langle \frac{1}{2\epsilon} x^T \nabla \tr((BB^T)^{\dagger}(\nabla f(z_t)^T- A) \Sigma_t (\nabla f(z_t)-A^T)), \delta \cP\rangle. \nonumber
	\end{eqnarray}
Plugging \eqref{eq:Vhat} and \eqref{eq:P0hat} into \eqref{eq:variationF} yields \eqref{eq:gradietnF}. 
}

{
\bibliographystyle{IEEEtran}
\bibliography{./refs}
}
\end{document}